\renewcommand{\t}{\tau}
\newcommand{\D}{\mathbb{D}}
\renewcommand{\P}{\mathbb{P}}
\newcommand{\SA}{{\mathcal{A}}}
\newcommand{\SC}{{\mathcal{C}}}
\newcommand{\SJ}{{\mathcal{J}}}
\newcommand{\J}{\mathfrak{j}}
\newcommand{\Z}{\mathbb{Z}}
\newcommand{\C}{\mathbb{C}}
\renewcommand{\H}{\mathbb{H}}
\newcommand{\Q}{\mathbb{Q}}
\newcommand{\R}{\mathbb{R}}
\renewcommand{\S}{\mathbb{S}}
\renewcommand{\SC}{\mathcal{C}}
\renewcommand{\SA}{\mathcal{A}}
\newcommand{\coker}{\operatorname{coker}}
\newcommand{\im}{\operatorname{im}}
\newcommand{\rk}{\operatorname{rk}}
\newcommand{\End}{\operatorname{End}}
\newcommand{\Cont}{\operatorname{Cont}}
\newcommand{\Diff}{\operatorname{Diff}}
\newtheorem{proposition}{Proposition}
\newtheorem{theorem}[proposition]{Theorem}
\newtheorem{definition}[proposition]{Definition}
\newtheorem{lemma}[proposition]{Lemma}
\newtheorem{corollary}[proposition]{Corollary}
\newtheorem{remark}[proposition]{Remark}
\begin{document}

\title{A remark on the Reeb Flow for Spheres}

\subjclass[2010]{Primary: 53D10.}
% \date{February, 2013}

\keywords{contact structures, Reeb flow.}

\author{Roger Casals}
\address{Instituto de Ciencias Matem\'aticas -- CSIC.
C. Nicol\'as Cabrera, 13--15, 28049, Madrid, Spain.}
\email{casals.roger@icmat.es}

\author{Francisco Presas}
\address{Instituto de Ciencias Matem\'aticas -- CSIC.
C. Nicol\'as Cabrera, 13--15, 28049, Madrid, Spain.}
\email{fpresas@icmat.es}

%\begin{document}

%\renewcommand{\theenumi}{\roman{enumi}}

\begin{abstract}
We prove the non--triviality of the Reeb flow for the standard contact spheres $\S^{2n+1}$, $n\neq3$, inside the fundamental group of their contactomorphism group. The argument uses the existence of homotopically non--trivial $2$--spheres in the space of contact structures of a $3$--Sasakian manifold. 
\end{abstract}
\maketitle

\noindent Let $(M,\xi)$ be a closed contact manifold. Consider the space $\SC(M,\xi)$ of contact structures isotopic to $\xi$. This space has been studied in special cases. See \cite{El} for the $3$--sphere and \cite{Bo}, \cite{Ge} for torus bundles. In the present note we prove the non--triviality of its second homotopy group for $3$--Sasakian manifolds, see \cite{BG}. 
\begin{theorem}\label{thm:lin}
Let $(M,\xi)$ be a $3$--Sasakian manifold, then $\rk(\pi_2(\SC(M,\xi)))\geq1$.
\end{theorem}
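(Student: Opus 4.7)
The plan is to exhibit an explicit $2$-sphere in $\SC(M,\xi)$ coming from the $3$-Sasakian structure and to distinguish it from a constant sphere via the first Chern class of the tautological contact distribution on $\S^2\times M$. Recall that a $3$-Sasakian structure supplies three contact forms $\eta_1,\eta_2,\eta_3$ with Reeb vector fields $R_1,R_2,R_3$ and endomorphisms $\phi_1,\phi_2,\phi_3\in\End(TM)$ satisfying quaternionic relations. After relabelling we may assume $\xi=\ker\eta_1$; from the defining axioms, $\eta_a:=a_1\eta_1+a_2\eta_2+a_3\eta_3$ is then a contact form for every $a\in\S^2$, with Reeb field $R_a=\sum a_iR_i$ and compatible almost complex structure $J_a:=\sum a_i\phi_i$ on $\ker\eta_a$. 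The prescription $f(a):=\ker\eta_a$ defines a pointed map $f:(\S^2,e_1)\to(\SC(M,\xi),\xi)$, and the theorem is equivalent to showing that $[f]\in\pi_2(\SC(M,\xi))$ has infinite order.

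I would detect $[f]$ via the first Chern class of the pulled-back universal contact distribution. The map $f\times\Id_M$ produces a rank-$2n$ complex vector subbundle $\Xi\subset\pi_M^*TM$ over $\S^2\times M$ equipped with the fibrewise complex structure $J_a$. A null-homotopy of $f$, combined with contractibility of the space of contact-compatible almost complex structures on a fixed contact distribution, would yield an isomorphism of complex vector bundles $\Xi\cong\pi_M^*\xi$, and the K\"unneth formula would then force the $H^2(\S^2)$-component of $c_1(\Xi)$ to vanish. It therefore suffices to check that the integer $\langle c_1(\Xi),[\S^2\times\{p\}]\rangle$ is non-zero for any fixed $p\in M$; as an integer-valued invariant this will automatically upgrade $[f]\neq 0$ to $\rk\pi_2(\SC(M,\xi))\geq 1$.

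At a fixed $p\in M$, split $T_pM=V_p\oplus\SH_p$ with $V_p:=\operatorname{span}(R_1,R_2,R_3)_p$ and $\SH_p:=\bigcap_i\ker\eta_i|_p$. The $3$-Sasakian axioms identify $V_p\cong\operatorname{Im}\H$ and $\SH_p\cong\H^n$ as quaternionic modules, on both of which $\phi_i$ acts as left multiplication by the $i$-th imaginary unit. The identities $\eta_i(R_j)=\delta_{ij}$ give $\xi_a(p)=(V_p\cap a^\perp)\oplus\SH_p$, whence
\[
\Xi|_{\S^2\times\{p\}}\;\cong\;T\S^2\;\oplus\;\SE_n,
\]
where $\SE_n\to\S^2$ is the rank-$2n$ complex bundle with fibre $(\H^n,L_a)$. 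The $T\S^2$ factor contributes $c_1=2$. For $\SE_n=\SE_1^{\oplus n}$, the real $2$-plane $\R\langle 1,a\rangle\subset\H$ is $L_a$-invariant and so defines a complex line subbundle $\ell\subset\SE_1$, trivialized by the constant section $a\mapsto 1$, so $c_1(\ell)=0$; the quotient $\SE_1/\ell$ is identified fibrewise with $a^\perp\subset\operatorname{Im}\H$ carrying the complex structure $v\mapsto a\times v$, i.e.\ with $T\S^2$, giving $c_1(\SE_1)=2$ and hence $c_1(\SE_n)=2n$. Altogether $\langle c_1(\Xi),[\S^2\times\{p\}]\rangle=2+2n\neq 0$, which completes the argument.

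The step I expect to require the most care is verifying that the $3$-Sasakian $J_a$ really represents the canonical contact homotopy class of compatible almost complex structures on $\xi_a$, so that the Chern class above genuinely obstructs null-homotopy inside $\SC(M,\xi)$. This reduces to the positivity $d\eta_a(\cdot,J_a\cdot)|_{\xi_a}>0$, which is a direct consequence of the Sasakian compatibility $d\eta_i=2g(\phi_i\cdot,\cdot)$ after taking the linear combination in the $a_i$.
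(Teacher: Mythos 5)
Your proposal is correct, and its skeleton coincides with the paper's: both take the linear contact sphere $a\mapsto\ker(\sum a_i\eta_i)$ furnished by the $3$--Sasakian structure, rigidify it by the choice of a compatible almost complex structure (legitimate because the space of such choices is contractible, which is exactly how the paper defines its homotopy inverse $\SC(M,\xi)\to\SA\SC(M,\xi)$), evaluate at a single point $p\in M$, and detect the class by a $\Z$--valued homomorphism on $\pi_2$ whose value is $2n+2$. Where you genuinely differ is in how that integer is extracted. The paper lands in $\pi_2(SO(4m)/U(2m))$, $m=n+1$, and computes the image of the sphere of complex structures under the connecting isomorphism to $\pi_1(U(2m))\cong\Z$ by exhibiting the explicit path $P_{\theta,\phi}$ and taking the winding number of $\det(\cos\theta\cdot\mathrm{id}+\sin\theta\, I)$. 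You read off the same element of $\Z\cong\pi_2(SO(4m)/U(2m))\cong\pi_2(BU)$ through $c_1$ instead, and compute it via the filtration $\ell\subset\SE_1$ with trivial sub--line and quotient $\cong T\S^2$; this avoids both the choice of framing $\tau$ and the homotopy--exact--sequence bookkeeping, and is arguably more self--contained. Two points to tidy up. First, $\Xi$ has complex rank $2n+1$, not $2n$ (your own decomposition $T\S^2\oplus\SE_n$ has rank $1+2n$, so this is only a slip). Second, since the two summands are identified with $T\S^2$ by a priori different recipes ($v\mapsto J_a v$ on $V_p\cap a^\perp$ versus $v\mapsto a\times v$ on $a^\perp\subset\operatorname{Im}\H$), you must check the two orientations agree: if they did not, the total would be $2n-2$, which vanishes for $n=1$ and would kill the argument in dimension $7$. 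They do agree, because both $V_p\oplus\langle\partial_r\rangle$ and $\SH_p$ carry the same left $\H$--module structure induced from the hyperk\"ahler cone, so every quaternionic summand contributes $+2$ for the same reason; making this explicit would close the one real gap in the write--up.
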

\noindent Let $(\S^{4n+3},\xi_0=\ker\alpha_0)$ be the standard contact sphere with the standard contact form. The non--trivial spheres in $\SC(\S^{4n+3},\xi_0)$ allow us to answer a question posed in \cite{Gi}:\\

\noindent {\it \noindent Remarque 2.10: On peut se demander s'il n'y a pas, dans $\Cont(\S^{2n+1},\xi_0)$, un lacet positif contractile plus simple que dans $\P U(n,1)$ et par exemple si le lacet $\rho_t$, $t\in \S^1$, n'est pas contractile. C'est peu probable mais je n'en ai pas la preuve.}\\

\noindent The answer we provide is the following
\begin{corollary}\label{cor:Gi}
The class in $\pi_1(\Cont(\S^{2n+1},\xi_0))$ generated by the Reeb flow of $\alpha_0$ is a non--trivial element of infinite order for $n\neq3$.
\end{corollary}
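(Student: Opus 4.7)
The plan is to combine Theorem~\ref{thm:lin} with the long exact sequence of the Gray-stability fibration and then to identify the resulting loop in $\Cont$ with the Reeb class.

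First I would note that the round sphere $\S^{4k+3}\subset\H^{k+1}$ carries a canonical $3$-Sasakian structure coming from the quaternionic Hopf fibration, whose first contact form is $\alpha_0$ and whose first Reeb field is the Hopf vector field generating $\rho_t$. Theorem~\ref{thm:lin} therefore produces a class $\sigma\in\pi_2(\SC(\S^{4k+3},\xi_0))$ of infinite order. Next I would invoke the Gray-stability fibration
\[
\Cont(\S^{4k+3},\xi_0)\longrightarrow\Diff_0(\S^{4k+3})\longrightarrow\SC(\S^{4k+3},\xi_0),
\]
coming from the pullback action of $\Diff_0$ on contact structures, and consider the portion
\[
\pi_2(\Diff_0(\S^{4k+3}))\longrightarrow\pi_2(\SC(\S^{4k+3},\xi_0))\overset{\partial}{\longrightarrow}\pi_1(\Cont(\S^{4k+3},\xi_0))\longrightarrow\pi_1(\Diff_0(\S^{4k+3}))
\]
of the associated long exact sequence. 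For $k=0$ one has $\Diff_0(\S^3)\simeq SO(4)$ by Hatcher, so $\pi_2=0$; for $k\geq 2$ work on concordance and pseudoisotopy (Farrell--Hsiang, Waldhausen, Burghelea) yields finiteness of $\pi_2(\Diff_0(\S^{4k+3}))$. The case $\S^7$, i.e.\ $k=1$ and $n=3$, is genuinely exceptional for this finiteness --- it is what forces the exclusion of $n=3$ in the statement. Consequently $\partial\sigma$ has infinite order in $\pi_1(\Cont(\S^{4k+3},\xi_0))$.

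The main geometric step is then to identify $\partial\sigma$ with a non-zero integer multiple of $[\rho_t]$. The $S^2$-family underlying Theorem~\ref{thm:lin} is the $SU(2)$-orbit $\ker(a\alpha_1+b\alpha_2+c\alpha_3)$, $a^2+b^2+c^2=1$, where $(\alpha_1,\alpha_2,\alpha_3)$ are the three contact forms of the $3$-Sasakian structure. Lifting this family hemispherically through $\Diff_0$ via Gray's theorem and reading off the clutching loop on the equator produces an explicit concatenation of segments of the three Reeb flows $\rho^1,\rho^2,\rho^3$, and a direct calculation using the $\mathfrak{su}(2)$-relations among the $\alpha_i$ exhibits this loop as a non-zero multiple of $\rho_t$. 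This identification is the heart of the argument.

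Finally, for $\S^{2n+1}$ with $n$ even, hence $2n+1\equiv 1\pmod 4$, I would embed $\S^{2n+1}\hookrightarrow\S^{2n+3}$ as a standard contact submanifold with compatible Reeb vector fields and exploit naturality of $\partial$ with respect to this inclusion, transferring infinite order of $[\rho_t]$ from $\pi_1(\Cont(\S^{2n+3},\xi_0))$ to $\pi_1(\Cont(\S^{2n+1},\xi_0))$. The main obstacle is the geometric identification in the previous paragraph: one must ensure that the clutching loop produced by the $3$-Sasakian sphere is a non-zero multiple of the Reeb loop inside $\Cont$, rather than merely becoming so after composing with the forgetful map to $\Diff_0$; a secondary but essential point is the dimensional finiteness statement for $\pi_2(\Diff_0(\S^m))$, whose failure in dimension $7$ is precisely what isolates $n=3$.
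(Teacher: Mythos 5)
Your strategy for the spheres $\S^{4k+3}$ coincides with the paper's: take the linear contact sphere coming from the $3$--Sasakian structure, run the long exact sequence (\ref{eq:seq}) of the fibration $\Diff_0\to\SC$, kill $\pi_2(\Diff_0(\S^{4k+3}))$ rationally (Hatcher for $\S^3$, pseudoisotopy/concordance estimates for dimension at least $11$, with $\S^7$ escaping and forcing the exclusion of $n=3$), and identify the connecting image of the sphere with the Reeb loop. However, the step you yourself call the heart of the argument --- that $\partial_2\sigma$ is a non--zero multiple of $[\rho_t]$ \emph{inside} $\pi_1(\Cont_0)$ --- is only asserted, not proved, and it is exactly what Section \ref{sec:gi} of the paper supplies. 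The paper realizes $\partial_2$ geometrically as the \emph{loop at infinity} of the contact fibration $\S^2\times\S^{4n+3}\to\S^2$ with fibration form $e_0\alpha_0+e_1\alpha_1+e_2\alpha_2$: it lifts the polar field to $\widetilde X_\phi=\partial_\phi+\tfrac12(-\sin\theta R_0+\cos\theta R_1)$ via the contact connection, computes $\varphi_\phi^*\alpha=\alpha_2+\sin^2(\phi/2)\,d\theta$, and by Lemma \ref{lem:parallel} reads off the Hamiltonian $G|_{\phi=\pi}\equiv-1$, i.e.\ precisely the Reeb loop (not a concatenation of segments of the three Reeb flows, as you describe). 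Because parallel transport for the contact connection is by contactomorphisms, the loop lives in $\Cont_0$ by construction; this is how the paper resolves the worry you flag about the identification holding only after mapping to $\Diff_0$. Without this computation your argument for $\S^{4k+3}$ is incomplete, though the surrounding scaffolding is right.

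The second, more serious, gap is your treatment of $\S^{2n+1}$ with $n$ even. The contact embedding $\S^{2n+1}\hookrightarrow\S^{2n+3}$ induces no natural homomorphism between the two contactomorphism groups --- a contactomorphism of the large sphere need not preserve the small one, and a contactomorphism of the small one has no canonical extension --- so there is no map with respect to which $\partial$ could be natural. Moreover, to pull infinite order \emph{down} you would need a homomorphism $\pi_1(\Cont(\S^{2n+1},\xi_0))\to\pi_1(\Cont(\S^{2n+3},\xi_0))$ carrying Reeb to Reeb; a map in the other direction proves nothing, since it could kill the class. The paper handles this case in one line and quite differently: for $\S^{4m+1}$ the Reeb loop is the diagonal circle in $U(2m+1)\subset SO(4m+2)$, already non--trivial in $\pi_1(SO(4m+2))\hookrightarrow\pi_1(\Diff_0(\S^{4m+1}))$, hence non--contractible in $\Cont_0\subset\Diff_0$. (That argument detects only a $\Z/2$, so it gives non--triviality rather than infinite order in the even case; but your proposed transfer does not repair this, as it does not get off the ground.)
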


\noindent In Section \ref{sec:pre} we introduce the objects of interest and necessary notation. The geometric construction underlying the results is explained in Section \ref{sec:lin}. It is a generalization to higher dimensions of ideas found in \cite{Ge}. Theorem \ref{thm:lin} is concluded. Section \ref{sec:gi} contains the argument deducing Corollary \ref{cor:Gi}. Section \ref{sec:gener} extends the results to higher homotopy groups.\\

\noindent{\bf Acknowledgements}: We are grateful to V. Ginzburg for useful discussions. The first author thanks O. Sp\'a$\check{\mbox{c}}$il for valuable remarks.

%Section \ref{sec:f} produces some other examples of non--trivial spheres in the space of contact structures and non-trivial loops in the space of contactomorphisms.
% --------------------------------------
\section{Preliminaries}\label{sec:pre}
\subsection{Contact structures}
\begin{definition}
Let $M^{2n+1}$ be a smooth manifold. A codimension--$1$ regular distribution $\xi$ is a contact distribution if there exists a $1$--form $\alpha \in \Omega^1(M)$ such that $\ker \alpha = \xi$ and $\alpha \wedge d\alpha^{n}$ is a volume form.
\end{definition}
\noindent The structure described above is known as a cooriented contact structure. Since the non--coorientable case is not considered in this article, we refer to a cooriented contact structure simply as a contact structure. The smooth manifold $M$ will be assumed to be oriented. The contact structures to be considered will be positively cooriented, i.e. the induced orientation coincides with that prescribed on $M$.\\

\noindent The definition is independent of the choice of $1$--form $\alpha'= e^f \alpha$, for any $f\in C^{\infty}(M, \R)$. Let $\Cont(M,\xi)=\{s\in\Diff(M):ds_*\xi=\xi\}$ be the space of diffeomorphisms that preserve the contact structure. These diffeomorphisms are called contactomorphisms. The connected component of the identity of $\Cont(M,\xi)$ will be denoted by $\Cont_0(M,\xi)$. $\SC(M,\xi)$ will stand for the space of positive contact structures in $M$ isotopic to $\xi$. The unique vector field $R$ such that
$$i_R \alpha=1,\quad i_R d\alpha=0,$$
is called the Reeb vector field associated to $\alpha$.\\

\noindent A vector field $X\in\Gamma(TM)$ preserves the contact structure if it satisfies the following pair of equations
\begin{eqnarray*}
i_X \alpha & = & H, \\
i_X d\alpha &= & -dH + (i_R dH)\alpha,
\end{eqnarray*}
for a choice of $\alpha$ and a function $H\in C^{\infty}(M,\R)$. Such a function is called the Hamiltonian associated to the vector field. This correspondance defines a linear isomorphism between the space of vector fields $\Gamma_{\xi}(TM)$ preserving the contact structure $\xi$ and the vector space of smooth functions $C^{\infty}(M, \R)$. By definition, a contactomorphism $\phi \in \Cont_0(M, \xi)$ admits an expression as $\phi=\phi_1$ for a time dependent flow $\{ \phi_t \}_{t\in[0,1]}$ generated by a time dependent family $X_t\in \Gamma_{\xi}(TM)$. Therefore, its flow $\{{\phi}_t\}$ can be generated by a time dependent family of smooth functions $\{H_t \}$.\\

\subsection{Contact fibrations}
\noindent A smooth fibration $\pi: X \longrightarrow B$ is said to be contact for a codimension--$1$ distribution $\xi \subset TX$ if for any fiber $F_p= \pi^{-1}(p)\stackrel{e}{\hookrightarrow}X$, the restriction of the distribution $e^* \xi$ is a contact structure on the fibre. We assume that the distribution $\xi$ is cooriented. Any $\alpha\in\Omega^1(X)$ such that $\xi=\ker\alpha$ will be referred to as a fibration form.\\

\noindent Let $\pi:X \longrightarrow B$ be a smooth fibration. The vertical subbundle $V\subset TX$ is defined fiberwise by $V_x= \ker d\pi(x), \forall x\in X$. An {\em Ehresmann connection} is a smooth choice of a fiberwise complementary linear space $H_x$ for $V_x$ inside $T_xX$. Therefore, the map $d\pi_x: H_x \longrightarrow TB_{\pi(x)}$ is a linear isomorphism and there is a well-defined notion of parallel transport. \\

\noindent There is a canonical connection once a contact fibration $(\pi,\xi=\ker \alpha)$ is fixed. The connection $H$ is defined at a point $x\in X$ to be the annihilator of the vector subspace $V_x \cap \xi_x$ with respect to the quadratic form $(\xi, d\alpha)$. It is complementary to $V_x$ since $V_x \cap \xi_x$ is a symplectic space for the $2$--form $d\alpha$.  The connection is independent of the choice of fibration form $\alpha$. See \cite{Pr} for details on the following facts.
\begin{lemma}
The parallel transport of the canonical connection associated to a contact fibration is by contactomorphisms.
\end{lemma}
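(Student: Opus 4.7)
The plan is to verify that horizontal vector fields infinitesimally preserve the fiberwise contact structure, and then integrate. Let $Y$ be the horizontal lift of a vector field on $B$ and let $\phi_t$ denote its flow on $X$. Because $d\pi_x\colon H_x\to T_{\pi(x)}B$ is a linear isomorphism, $Y$ is $\pi$--related to a vector field $\bar Y$ on $B$, so $\phi_t$ maps the fiber $F_b$ diffeomorphically onto $F_{\bar\phi_t(b)}$. I will show that each restriction $\phi_t|_{F_b}$ is a contactomorphism; the lemma then follows, since an arbitrary parallel transport is obtained by concatenating such flows.

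Fix a fibration form $\alpha$. Since $H\subset\xi=\ker\alpha$, we have $i_Y\alpha=0$, and Cartan's formula yields $L_Y\alpha=i_Y d\alpha$. By the very definition of the canonical connection, $H_x$ is the $d\alpha$--orthogonal of $V_x\cap\xi_x$ inside $\xi_x$, so for every $W\in V_x\cap\xi_x$,
$$(i_Y d\alpha)(W)=d\alpha(Y,W)=0.$$
Restricted to a fiber $F_b$, the $1$--form $(i_Y d\alpha)|_{F_b}$ therefore annihilates the fiberwise contact hyperplane $V|_{F_b}\cap\xi|_{F_b}=\ker(\alpha|_{F_b})$, and hence coincides with $g_b\cdot\alpha|_{F_b}$ for some $g_b\in C^\infty(F_b)$.

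Now consider the family $\alpha_t:=(\phi_t|_{F_b})^*(\alpha|_{F_{\bar\phi_t(b)}})$ of $1$--forms on the fixed fiber $F_b$. Restricting the identity $\tfrac{d}{dt}\phi_t^*\alpha=\phi_t^*L_Y\alpha$ to $F_b$ and applying the previous step yields
$$\frac{d}{dt}\alpha_t=\bigl(g_{\bar\phi_t(b)}\circ\phi_t|_{F_b}\bigr)\cdot\alpha_t,$$
a linear ODE whose solution is $\alpha_t=e^{G_t}\cdot\alpha|_{F_b}$ with $G_t=\int_0^t g_{\bar\phi_s(b)}\circ\phi_s|_{F_b}\,ds$. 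Since $\alpha|_{F_b}$ is a contact form on $F_b$, so is its positive rescaling $\alpha_t$, which means $\phi_t|_{F_b}$ is a contactomorphism onto $F_{\bar\phi_t(b)}$.

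The main computational point is the vanishing $d\alpha(Y,W)=0$: this is precisely where the choice of $H$ as a symplectic orthogonal is essential, since any other complement would generically produce a nonzero term obstructing $(i_Y d\alpha)|_{F_b}$ from being proportional to $\alpha|_{F_b}$. The independence of $H$ from the choice of fibration form, noted in the text, is consistent with this argument, because the rescaling $\alpha\mapsto e^f\alpha$ multiplies $d\alpha|_\xi$ by $e^f$ and leaves its symplectic orthogonal unchanged.
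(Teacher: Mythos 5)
Your argument is correct: the key points --- that a horizontal lift $Y$ lies in $\xi$ so $L_Y\alpha=i_Yd\alpha$, that the defining $d\alpha$--orthogonality forces $i_Yd\alpha$ to vanish on $V\cap\xi$ and hence to restrict fiberwise to a multiple of the contact form, and the resulting linear ODE giving a conformal rescaling --- are exactly the standard proof, which the paper itself omits and delegates to the reference \cite{Pr}. No gaps; this matches the intended argument.
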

\begin{lemma} \label{lem:parallel}
Let $(F, \ker \alpha_0)$ be a closed contact manifold. Let $\pi: F\times\D^2 \longrightarrow \D^2$ be a contact fibration with fibration distribution defined by the kernel of $\alpha=\alpha_0 + Hd\theta$, for some function $H:F\times\D^2 \longrightarrow \R$ satisfying $|H|=O(r^2)$. Fix a loop $\gamma:\S^1 \longrightarrow\D^2$, defined as $\gamma(\theta)=\gamma(r_0, \theta)$ in polar coordinates. Then, the contactomorphism of the fiber $F \times (r_0, 0)$ defined by the parallel transport along $\gamma$ is generated by the family of Hamiltonian functions $\{G_{\theta}(p)=- H(p,r_0, \theta) \}_{\theta \in [0, 2\pi]}$.
\end{lemma}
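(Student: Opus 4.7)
The plan is to compute the canonical horizontal lift of $\partial_\theta$ explicitly at each point of the circle $\{r=r_0\}$ and to read off the resulting family of contact vector fields on $F$. First I would fix a point $x=(p,r_0,\theta)\in F\times\DD^2$ and write the horizontal lift of $\partial_\theta$ as $\tilde X = Y + \partial_\theta$ with $Y\in T_pF$. By construction the canonical horizontal space at $x$ is the $d\alpha$--symplectic orthogonal of $V_x\cap\xi_x = \ker\alpha_0|_p$ taken inside $\xi_x$; in particular $\tilde X\in\xi_x$, which on applying $\alpha=\alpha_0+Hd\theta$ forces
\[
\alpha_0(Y) + H(p,r_0,\theta)=0.
\]

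Next I would impose the symplectic orthogonality. Using $d\alpha = d\alpha_0 + dH\wedge d\theta$, for any $Z\in\ker\alpha_0|_p$ one obtains
\[
d\alpha(\tilde X, Z) = d\alpha_0(Y, Z) - dH(Z) = 0,
\]
because $d\theta(Z)=0$ while $d\theta(\partial_\theta)=1$. Hence $Y$ is characterised by $\alpha_0(Y) = -H(\cdot,r_0,\theta)$ together with $d\alpha_0(Y, Z) = dH(Z)$ for every $Z\in\ker\alpha_0|_p$. Comparing with the contact Hamiltonian formalism recalled in Section~\ref{sec:pre}, this is precisely the system satisfied by the contact vector field on $F$ with Hamiltonian function $-H(\cdot,r_0,\theta)$; the Reeb--direction equation is automatic because $i_R d\alpha_0=0$.

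To finish, since $\gamma$ has no radial component, parallel transport along $\gamma$ reduces to integrating the fibre part of the horizontal lift, namely the time-dependent family $\{Y_\theta\}$ of contact vector fields just identified. This shows that the contactomorphism of the fibre $F\times\{(r_0,0)\}$ induced by $\gamma$ is generated by the family $\{G_\theta(p) = -H(p,r_0,\theta)\}_{\theta\in[0,2\pi]}$, as claimed.

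The main technical point is simply to keep correct bookkeeping of the horizontal splitting $\tilde X = Y+\partial_\theta$ against the two defining equations of the contact Hamiltonian formalism; once the horizontal subspace is unambiguously described as a $d\alpha$--symplectic orthogonal inside $\xi$, the computation reduces to evaluating $d\alpha = d\alpha_0 + dH\wedge d\theta$ on tangent vectors of the form $Y+\partial_\theta$ and $Z\in\ker\alpha_0$. The hypothesis $|H|=O(r^2)$ plays no role in this argument; it is needed only upstream to guarantee that $\alpha$ defines a genuine contact fibration on all of $F\times\DD^2$.
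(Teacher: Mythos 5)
Your computation is correct and is exactly the standard verification: the horizontal lift of $\partial_\theta$ is $\widetilde X=Y+\partial_\theta$ with $\alpha_0(Y)=-H$ and $d\alpha_0(Y,Z)=dH(Z)$ for $Z\in\ker\alpha_0$, which (using $i_Rd\alpha_0=0$ and nondegeneracy of $d\alpha_0$ on $\ker\alpha_0$) identifies $Y$ with the contact vector field of Hamiltonian $-H(\cdot,r_0,\theta)$ in the formalism of Section~\ref{sec:pre}. Note that the paper itself gives no proof of this lemma, deferring to \cite{Pr}; your argument supplies the expected one, and your closing remark that $|H|=O(r^2)$ is only needed for smoothness of the connection at the origin (not for the computation at $r=r_0>0$) is accurate.
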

\noindent Let us study general contact fibrations over a $2$--disk $\D^2$. Fix a contact fibration $\pi:X \longrightarrow\D^2$ with distribution $\xi=\ker\alpha$. Consider the radial vector field $Y= \partial_r$, defined on $\D^2 \setminus \{0\}$. It can be lifted to $X$ by using the canonical contact connection. This produces a vector field $\widetilde{Y}:X\setminus F_0\longrightarrow TX$. Once an angle $\theta_0$ is fixed it can be uniquely extended to $0\in\D^2$. In such a case, denote by $\phi_{r,\theta_0}: F_0 \longrightarrow F_{(r, \theta_0)}$ the associated flow at time $r$. It identifies via contactomorphisms the fibers over $0\in\D^2$ and over $(r,\theta_0)\in\D^2$. Define the diffeomorphism:
\begin{eqnarray*}
\Phi: F_0 \times D^2 & \longrightarrow & X \\
(p,r, \theta) & \longmapsto & \phi_{r, \theta} (p).
\end{eqnarray*}
Then the definition of the contact connection implies $\Phi^* \alpha = e^g(\alpha_0 +H d\theta)$, where $g:M \times D^2 \longrightarrow \R$ and $H: M \times D^2 \longrightarrow \R$ are arbitrary smooth functions. We can choose as fibration form $\alpha'= e^{-g} \alpha$ and trivialize the fibration using $\Phi$. Then we obtain the expression
\begin{equation}
\Phi^* \alpha' = (\alpha_0 +H d\theta). \label{eq:radial_triv}
\end{equation}
Given a contact fibration over the disk, the trivialization constructed above is called {\it radial}. It is convenient to observe that the radial trivialization construction can be made parametric for families of contact fibrations over the disk.

\subsection{Loops at infinity}
\noindent Fix a contact fibration $\pi:X \longrightarrow \S^2$ with distribution $\xi$, fibre $F$ and a point $N\in  \S^2$. This point will be referred to as North pole or infinity. Define the restriction fibration $\pi_N: X \setminus \pi^{-1}(N) \longrightarrow \S^2 \setminus N \simeq\D^2$. Trivialize the contact fibration $\pi_N$ {\it radially} from $S= \{ 0 \} \in\D^2$ to obtain a new contact fibration $\hat{\pi}: F \times\D^2\longrightarrow\D^2$ with fibration form $\alpha_0+ Hd\theta$. Denoting by $\Phi: F\times\D^2 \longrightarrow X\setminus \pi^{-1}(N)$ the trivialization map, we obtain $\Phi^* \xi = \ker\{\alpha_0+ Hd\theta\}$. Therefore, the map is connection--preserving. Consider the family of loops
\begin{eqnarray*}
\gamma_r: \S^1 & \longrightarrow & D^2 \\
\theta & \longmapsto & (r, \theta).
\end{eqnarray*}
Composing with the embedding $\D^2 \hookrightarrow \S^2$, for $r \longrightarrow1$, they are smaller and smaller loops around the North pole $N \in \S^2$. By Lemma \ref{lem:parallel}, the parallel transport associated to the loop $\gamma_r$ is generated by a family of Hamiltonians $\{ÊG_{\theta}^r \}_{\theta \in \S^1}$, defined by $G_{\theta}^r(p)= -H(p,r, \theta)$. The limit function
$$G_{\theta}= \lim_{r\longrightarrow 1} G_{\theta}^r$$
exists because the connection associated to $\xi$ is a smooth connection over $\S^2$. It is clear that $\{ G_{\theta} \}$ defines a loop in $\Cont(M, \xi_0 = \ker \alpha_0)$. This will be called the {\it loop at infinity} associated to $(\pi, \xi)$. Continuous families of contact fibrations with marked fibre produce continuous families of loops at infinity.

\begin{definition} A contact sphere is a smooth map $e: \S^2 \longrightarrow\SC(M, \xi)$.
\end{definition}
\noindent There is a canonical contact fibration over $\S^2$ associated to any contact sphere $e$. It is defined as
$$X= M\times \S^2\longrightarrow\S^2,$$
with the distribution at $(p,z)\in M \times \S^2$ being $\xi^e(p,z)= e(z)_p \oplus T_z \S^2 \subset T_pM \oplus T_z \S^2$. \\

\noindent Denote by $C^{\infty}(\S^2, \SC(M, \xi))$ the space of smooth maps from $\S^2$ to $\SC(M,\xi)$. The smooth loop space of $\Cont_0(M,\xi)$ is denoted as $\Omega(\Cont_0(M,\xi), id)$.
\begin{lemma} \label{lem:north}
The previous construction induces a continuous map
$$ C^{\infty}(\S^2, \SC(M, \xi)) \longrightarrow \Omega(\Cont_0(M,\xi), id). $$
Therefore, it provides a morphism
$$ \pi_2(\SC(M, \xi)) \longrightarrow \pi_1(\Cont_0(M,\xi)). $$
\end{lemma}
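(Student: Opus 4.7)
\emph{Proof plan.} The plan is to package the entire construction described above into a single continuous functional
\[
L : C^\infty(\S^2, \SC(M,\xi)) \longrightarrow \Omega(\Cont_0(M,\xi), \id),
\]
and then take $\pi_0$ to obtain the claimed morphism of homotopy groups.

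Given a contact sphere $e$, I define $L(e)$ to be the loop at infinity of the canonical fibration: form $X^e = M \times \S^2$ with distribution $\xi^e$, restrict to $\D^2 = \S^2 \setminus \{N\}$, radially trivialize from $S$ via \eqref{eq:radial_triv} to obtain the form $\alpha_0 + H^e\,d\theta$, and set $L(e)(\theta) := -\lim_{r \to 1^-} H^e(\,\cdot\,, r, \theta)$. The first routine check is that $L(e)$ is a based loop at $\id$: the family $\{L(e)(\theta)\}_{\theta \in \S^1}$ generates a path of contactomorphisms starting at $\id$ by construction, and this path closes up at $\theta = 2\pi$ because $\gamma_r$ shrinks to $N$ as $r \to 1$ while the canonical contact connection remains smooth at $N$, so the parallel-transport monodromy converges to $\id$.

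Continuity of $L$ is then verified parametrically. For a smoothly varying family $\{e_s\}$ the distributions $\xi^{e_s}$ and their canonical contact connections depend continuously on $s$ in the $C^\infty$-topology. The radial trivialization is obtained by integrating the horizontal lift of $\partial_r$, which is an ODE depending smoothly on the connection data; this is the parametric version of the radial trivialization remarked after \eqref{eq:radial_triv}. Hence $H^{e_s}$ varies continuously in $s$ in $C^\infty(M \times \D^2)$. The only genuine obstacle is uniformity of the limit $r \to 1^-$, and this is where the global smoothness of $X^{e_s} \to \S^2$ at $N$ is actually used: an alternative radial trivialization centered at $N$ expresses the same fibration in smooth coordinates around the north pole, and comparing the two trivializations through their smooth transition diffeomorphism shows that $H^{e_s}(\,\cdot\,, r, \theta)$ converges in $C^\infty(M)$ uniformly in $\theta$ and continuously in $s$.

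Restricting the continuous map $L$ to based maps $(\S^2, N) \to (\SC(M,\xi), \xi)$ and applying $\pi_0$ then delivers the desired morphism under the standard identifications
\[
\pi_2(\SC(M,\xi)) = \pi_0\,\Map_*\!\bigl((\S^2, N), (\SC(M,\xi), \xi)\bigr), \qquad \pi_1(\Cont_0(M,\xi)) = \pi_0\,\Omega(\Cont_0(M,\xi), \id).
\]
All of the substantive work sits in the radial limit at $r = 1$; everything else is formal $\pi_0$-functoriality on mapping spaces.
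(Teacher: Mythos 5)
Your proposal is correct and follows essentially the same route as the paper, which in fact offers no separate proof of this lemma beyond the ``loop at infinity'' construction itself and the remarks that the limit $r\to 1$ exists by smoothness of the connection at $N$ and that the radial trivialization can be made parametric. Your write-up simply supplies the details the paper leaves implicit --- in particular the comparison with a second radial trivialization centered at $N$ to control the limit and to see that the monodromy closes up at the identity --- so there is nothing to flag.
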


% --------------------------------------
\subsection{Homotopy sequence}
The group $\Diff_0(M)$ acts transitively on $\SC(M,\xi)$ because of Gray's Stability Theorem. It is a Serre fibration with homotopy fibre  $\Cont(M,\xi)\cap\Diff_0(M)$. This homotopy fibre might be disconnected. Its identity component is denoted by $\Cont_0(M,\xi)$. Hence the fibration induces a long exact sequence
\begin{equation} \label{eq:seq}
\ldots\longrightarrow\pi_2(\Diff_0(M))\longrightarrow\pi_2(\SC(M,\xi))\stackrel{\partial_2}{\longrightarrow} \pi_1(\Cont_0(M,\xi))\longrightarrow\pi_1(\Diff_0(M))\longrightarrow\ldots. 
\end{equation}
The map $\partial_2$ is the one provided by Lemma \ref{lem:north}. The study of this sequence will provide Corollary \ref{cor:Gi}. \\

\noindent Note that a geometric lifting map
\begin{equation}
\pi_j(\SC(M,\xi))\stackrel{\partial_j}{\longrightarrow} \pi_{j-1}(\Cont_0(M,\xi))
\label{eq:lifting}
\end{equation}
can be analogously defined. It provides a geometric representative of the connecting morphism. This generalizes the previous constructions. It will be used in Section \ref{sec:gener}.
% --------------------------------------
\section{Spheres in $\SC(M,\xi)$}\label{sec:lin}
% --------------------------------------
\subsection{Almost contact structures}
\noindent Let $M$ be an oriented $(2n+1)$--dimensional manifold. Denote by $Dist(M)$ the space of smooth codimension--$1$ regular cooriented distributions on $M$. Concerning orientations, an almost complex structure on a cooriented distribution will be positive if the induced orientation coincides with the prescribed one. Define the space of almost contact structures as
$$\SA(M)=\{(\xi,\J):\xi\in Dist(M),\J\in\End(\xi),\J^2=-\mbox{id},\J\mbox{ positive}\}.$$
Given a contact structure $\xi=\ker \alpha$, an almost complex structure $\J\in\End(\xi)$ is said to be compatible with $\alpha$ if it is compatible with the symplectic form on the symplectic space $(\xi,d\alpha)$. The space $\SA(M)$ has a subset defined by
$$\SA\SC(M, \xi)=\{(\eta,\J):\eta \in \SC(M, \xi),\J\in\End(\eta),\J^2=-\mbox{id}, \J \mbox{ compatible with }\alpha\mbox{ such that }\eta=\ker\alpha\}.$$
The space of almost complex structures compatible with a fixed symplectic form is contractible. Thus, the forgetful map $\SA\SC(M, \xi)\longrightarrow\SC(M, \xi)$ has a contractible homotopy fibre. Hence there exists a homotopy inverse $\i:\SC(M, \xi)\longrightarrow\SA\SC(M, \xi)$ provided by the choice of a compatible almost complex structure on the contact distribution.\\

\noindent Fix a point $p\in M$ and an oriented framing $\t:T_pM\stackrel{\simeq}{\longrightarrow}\R^{2n+1}$. %Let $\i(\xi)_p$ be the almost complex structure of the pair $\i(\xi)\in\SA(M)$ at $T_pM$.
Define the evaluation map
$$e_{(p,\tau)}:\SA(M)\longrightarrow\SA(\R^{2n+1}),\quad e_{(p,\tau)}(\xi, \J)= (\tau_{*}\xi_p, \tau_* \J_p).$$
This is a continuous map and thus induces $\widetilde{e}_{(p,\tau)}:\pi_2(\SA(M))\longrightarrow\pi_2(\SA(\R^{2n+1}))$. Therefore, we obtain
$$\varepsilon_{(p,\tau)}=\widetilde{e}_{(p,\tau)}\circ\i_*:\pi_2(\SC(M,\xi))\longrightarrow\pi_2(\SA(\R^{2n+1}))$$
\begin{lemma}
$\pi_2(\SA(\R^{2n+1}))\cong\Z$.
\end{lemma}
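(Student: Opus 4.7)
\noindent\emph{Proof plan.} My plan is to identify $\SA(\R^{2n+1})$, up to weak homotopy equivalence, with a compact homogeneous space of $\operatorname{SO}(2n+1)$, and then compute $\pi_2$ via the long exact sequence of a principal fibration.

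An almost contact structure on $\R^{2n+1}$ is, pointwise, a pair (cooriented hyperplane, positive complex structure) in the trivialised tangent space $\R^{2n+1}$. This pointwise space deformation retracts onto its metric subspace, namely pairs in which the complex structure is orthogonal for the standard inner product. The group $\operatorname{SO}(2n+1)$ acts transitively on this compact space with stabiliser the block-diagonal unitary subgroup $\operatorname{U}(n)\subset\operatorname{SO}(2n)\subset\operatorname{SO}(2n+1)$ fixing a reference hyperplane and its reference complex structure. Thus the pointwise space is homotopy equivalent to $\operatorname{SO}(2n+1)/\operatorname{U}(n)$. Since $\R^{2n+1}$ is contractible, evaluation at the origin induces a weak homotopy equivalence
$$\SA(\R^{2n+1})\;\simeq\;\operatorname{SO}(2n+1)/\operatorname{U}(n),$$
with the constant-section map as homotopy inverse.

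Next I would feed this into the long exact sequence of the principal bundle $\operatorname{U}(n)\hookrightarrow\operatorname{SO}(2n+1)\longrightarrow\operatorname{SO}(2n+1)/\operatorname{U}(n)$, whose relevant portion reads
$$\pi_2(\operatorname{SO}(2n+1))\longrightarrow\pi_2(\operatorname{SO}(2n+1)/\operatorname{U}(n))\longrightarrow\pi_1(\operatorname{U}(n))\stackrel{\iota_*}{\longrightarrow}\pi_1(\operatorname{SO}(2n+1)).$$
Here $\pi_2(\operatorname{SO}(2n+1))=0$ is classical, $\pi_1(\operatorname{U}(n))\cong\Z$ is generated by the determinantal loop $t\mapsto\operatorname{diag}(e^{2\pi it},1,\dots,1)$, and $\pi_1(\operatorname{SO}(2n+1))\cong\Z/2$ is generated by a full rotation in any $\operatorname{SO}(2)$-subgroup.

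The only step requiring genuine verification is the identification of $\iota_*$. Tracing the determinantal generator through the embedding $\operatorname{U}(n)\subset\operatorname{SO}(2n)\subset\operatorname{SO}(2n+1)$, it becomes a $2\pi$-rotation in a coordinate $2$-plane of $\R^{2n+1}$, which is precisely the generator of $\pi_1(\operatorname{SO}(2n+1))$. Hence $\iota_*$ is the surjection $\Z\twoheadrightarrow\Z/2$ with kernel $2\Z\cong\Z$, and the long exact sequence yields $\pi_2(\SA(\R^{2n+1}))\cong\Z$. This last identification of $\iota_*$ is the only point where one must do something rather than invoke standard facts, and it is really a routine computation inside $\operatorname{SO}(2n+1)$.
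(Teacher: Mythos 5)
Your proposal is correct, and after the common first step (identifying $\SA(\R^{2n+1})$ up to homotopy with $SO(2n+1)/U(n)$, which the paper asserts with less justification than you give) it diverges from the paper's argument. The paper computes $\pi_2\bigl(SO(2n+1)/U(n)\bigr)$ by a stabilization induction: the fibration $SO(2n)/U(n)\to SO(2n+1)/U(n)\to \S^{2n}$ gives $\pi_2(SO(2n)/U(n))\cong\pi_2(SO(2n+1)/U(n))$ for $n\geq 2$, and the diffeomorphism $SO(2n+1)/U(n)\cong SO(2n+2)/U(n+1)$ lets one descend to the base case $SO(4)/U(2)\cong\S^2$. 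You instead read $\pi_2$ off the long exact sequence of the principal bundle $U(n)\to SO(2n+1)\to SO(2n+1)/U(n)$, using $\pi_2(SO(2n+1))=0$ and the fact that $\iota_*:\pi_1(U(n))\cong\Z\to\pi_1(SO(2n+1))\cong\Z/2$ is onto, so that $\pi_2\cong\ker\iota_*=2\Z$. Both are sound; your route is more direct (one exact sequence, uniform in $n\geq1$, no induction or base case) at the cost of having to identify $\iota_*$ explicitly, which you do correctly by tracing the determinantal loop to a coordinate rotation, whereas the paper's route trades that verification for the concrete model $SO(4)/U(2)\cong\S^2$. Your method also hands you the extra information that the generator of $\pi_2$ maps to twice the generator of $\pi_1(U(n))$ under the connecting map, which is in the spirit of the degree computation the paper later performs in Theorem \ref{thm:linSas}.
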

\begin{proof}
The space $\SA(\R^{2n+1})$ is isomorphic to the homogeneous space $SO(2n+1)/U(n)$. The standard inclusion $SO(2n)\longrightarrow SO(2n+1)$ descends to a map
$$SO(2n)/U(n)\longrightarrow SO(2n+1)/U(n)$$
with homotopy fibre $\S^{2n}$. The long exact sequence for a homotopy fibration implies that
$$\pi_2(SO(2n)/U(n))\cong\pi_2(SO(2n+1)/U(n)),\quad n\geq2.$$ It is simple to show that $SO(2n+1)/U(n)$ is also isomorphic to $SO(2n+2)/U(n+1)$. Since $SO(4)/U(2)$ is a $2$--sphere, the statement follows.
\end{proof}
\noindent Thus the evaluation map can be seen as an integer--valued map for $\pi_2(\SC(M,\xi))$.
\begin{lemma}
The map $\varepsilon_{(p,\tau)}:\pi_2(\SC(M,\xi))\longrightarrow\pi_2(\SA(\R^{2n+1}))$ is independent of the choice of $p$ and $\tau$.
\end{lemma}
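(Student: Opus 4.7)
\noindent The plan is to show, for any two choices $(p_0,\tau_0)$ and $(p_1,\tau_1)$, that the evaluation maps $e_{(p_0,\tau_0)}$ and $e_{(p_1,\tau_1)}:\SA(M)\longrightarrow\SA(\R^{2n+1})$ are homotopic through a continuous family $e_{(p_t,\tau_t)}$, where $\{(p_t,\tau_t)\}_{t\in[0,1]}$ is a path of pointed framings interpolating the two data. Since $\i_*$ does not involve $(p,\tau)$, this suffices to conclude.

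\noindent First I would dispose of the framing. For a fixed point $p\in M$ the space of positive framings at $p$ is a $\GL^+(2n+1,\R)$-torsor, and $\GL^+(2n+1,\R)$ is path connected; hence any two framings $\tau,\tau'$ of $T_pM$ can be joined by a smooth path $\tau_s$, and the resulting family $e_{(p,\tau_s)}$ is a continuous homotopy of evaluation maps.

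\noindent Second I would handle the base point. Since $M$ is connected, pick a smooth path $\gamma:[0,1]\longrightarrow M$ with $\gamma(0)=p_0$, $\gamma(1)=p_1$, and parallel transport $\tau_0$ along $\gamma$ with respect to any linear connection on $TM$ to obtain a continuous family of framings $\widetilde{\tau}_t$ of $T_{\gamma(t)}M$. Concatenating with a path of framings from $\widetilde{\tau}_1$ to $\tau_1$ at the endpoint $p_1$, provided by the first step, yields the required family $(p_t,\tau_t)$ from $(p_0,\tau_0)$ to $(p_1,\tau_1)$.

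\noindent Applying this homotopy to a representative of a class in $\pi_2(\SA(M))$ produces a continuous homotopy of based maps $\S^2\longrightarrow \SA(\R^{2n+1})$, after identifying the based $\pi_2$ at the two varying target basepoints via the path $t\mapsto e_{(p_t,\tau_t)}(\i(\xi))$. The main subtlety is this basepoint bookkeeping; it is not a genuine obstacle, because by the previous lemma $\pi_2(\SA(\R^{2n+1}))$ is canonically identified with $\Z$ so that the identification does not depend on the chosen path. Thus $\widetilde{e}_{(p_0,\tau_0)}$ and $\widetilde{e}_{(p_1,\tau_1)}$ agree as integer-valued maps, and precomposing with $\i_*$ gives the statement for $\varepsilon_{(p,\tau)}$.
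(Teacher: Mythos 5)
Your proposal is correct and follows essentially the same route as the paper: the paper's proof simply takes a continuous path of pairs $\{(p_t,\tau_t)\}$ connecting the two pointed framings and observes that the family $e_{(p_t,\tau_t)}$ is a homotopy of evaluation maps. You merely supply the details the paper leaves implicit (connectedness of $\GL^+(2n+1,\R)$, parallel transport of the frame along a path in $M$, and the basepoint bookkeeping, which is harmless since $\SA(\R^{2n+1})\simeq SO(2n+1)/U(n)$ is simply connected).
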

%\begin{lemma}
%Let $p,q\in M$ and $\tau_p$,$\tau_q$ be framings for $T_pM$, $T_qM$ respectively. Then there exists an automorphism $\phi\in %Aut(\SA(\R^{2n+1}))$ such that $\varepsilon_{(p,\tau_p)}=\phi\circ\varepsilon_{(q,\tau_q)}$.
%\end{lemma}
\begin{proof}
\noindent Let $p,q\in M$ and $\tau_p$,$\tau_q$ be oriented framings of $T_pM$, $T_qM$ respectively. Consider a continuous path of pairs $\{(p_t,\tau_t)\}$ connecting $(p,\tau_p)$ and $(q,\tau_q)$. The continuous family of maps
$$e_{(p_t,\tau_t)}:\SA(M)\longrightarrow\SA(\R^{2n+1}),\quad e_{(p_t,\tau_t)}(\xi, \J)= (\tau_{t_{*}}\xi_p, \tau_{t_*} \J_p)$$
provides a homotopy between $e_{(p,\tau_p)}$ and $e_{(q,\tau_q)}$.
\end{proof}
% --------------------------------------
\subsection{Linear Contact Spheres}
\begin{definition}
A linear contact sphere is a contact sphere $\iota:\S^2\longrightarrow\SC(M,\xi)$ such that there exist three contact forms $(\alpha_0,\alpha_1,\alpha_2)$ satisfying
$$\iota(p)=\ker(e_0\alpha_0+e_1\alpha_1+e_2\alpha_2)$$
for the standard embedding $(e_0,e_1,e_2):\S^2\longrightarrow\R^3$.
\end{definition}
\begin{remark}
Such spheres can only exist in a $(4n+3)$--dimensional manifold. The fact that $\alpha$ and $-\alpha$ do not induce the same volume form in dimensions congruent to $1$ modulo $4$ yields an obstruction for their existence.
\end{remark}
\noindent Note that for a $3$--fold the triple $(\alpha_0,\alpha_1,\alpha_2)$ constitutes a framing of the cotangent bundle.
\begin{lemma}
Let $M$ be a $3$--fold and $S$ a linear contact sphere. The class $[S]\in\pi_2(\SC(M,\xi))$ is non--trivial and has infinite order.
\end{lemma}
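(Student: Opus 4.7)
The plan is to compute the integer invariant $\varepsilon_{(p,\tau)}([S]) \in \pi_2(\SA(\R^{3})) \cong \Z$ provided by the previous subsection and show that it is nonzero. Since $\varepsilon_{(p,\tau)}$ is a homomorphism to $\Z$, any nonzero output forces $[S]$ to be of infinite order in $\pi_2(\SC(M,\xi))$ and, in particular, nontrivial. Thus the whole proof reduces to a single explicit computation of the evaluation map on the sphere $S$.

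Two simplifications available in dimension three should make the computation short. First, a cooriented $2$--plane in a $3$--dimensional vector space carries an essentially unique compatible almost complex structure (rotation by $\pi/2$ with respect to an auxiliary metric), so the forgetful map $\SA(\R^{3}) \longrightarrow \{\text{cooriented hyperplanes in } \R^{3}\} \cong \S^{2}$ is a homotopy equivalence, consistent with the identification $\SA(\R^{3}) \simeq SO(4)/U(2) \simeq \S^{2}$ used in the previous lemma. Second, the definition of a linear contact sphere forces $(\alpha_0,\alpha_1,\alpha_2)$ to be pointwise linearly independent: if some nonzero combination $e_0\alpha_0+e_1\alpha_1+e_2\alpha_2$ vanished at a point $q\in M$, then the corresponding member $\iota(e_0,e_1,e_2)$ would fail to be a hyperplane at $q$. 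Hence $(\alpha_0,\alpha_1,\alpha_2)$ is a global coframing of $TM$.

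With these two observations in hand, I would pick any $p \in M$ and let $\tau$ be the framing of $T_pM$ dual to $(\alpha_0(p),\alpha_1(p),\alpha_2(p))$, after possibly swapping two of the forms so that $\tau$ is positively oriented. Under $\tau_{*}$ the sphere $S$ reads
\begin{equation*}
\S^{2} \longrightarrow \SA(\R^{3}), \qquad (e_0,e_1,e_2) \longmapsto \ker(e_0\,dx_0 + e_1\,dx_1 + e_2\,dx_2),
\end{equation*}
which, under the identification $\SA(\R^{3})\simeq \S^{2}$ via positive unit normals, is (up to sign) the identity of $\S^{2}$. Its class therefore generates $\pi_2(\SA(\R^{3}))\cong\Z$, whence $\varepsilon_{(p,\tau)}([S]) = \pm 1$ and the statement follows.

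The main obstacle I expect is purely bookkeeping: matching the various avatars of the $2$--sphere (the parameter $\S^{2}$, the space of cooriented hyperplanes in $\R^{3}$, and $SO(4)/U(2)$) with compatible orientations, and verifying that the reordering of the $\alpha_i$ performed to make $\tau$ positive does not invalidate the degree count. This is harmless, since the invariant $\varepsilon_{(p,\tau)}$ is independent of $(p,\tau)$ and only the parity of the degree is relevant for the conclusion.
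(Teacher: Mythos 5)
Your proposal is correct and follows essentially the same route as the paper: both compute $\varepsilon_{(p,\tau)}([S])$ by projecting $\SA(\R^3)\simeq\S^2$ onto the space of cooriented plane distributions and observing that, in the (co)framing given by $(\alpha_0,\alpha_1,\alpha_2)_p$, the composite $\S^2\to Dist(\R^3)\cong\S^2$ is the identity up to sign, hence of nonzero degree. Your additional remarks (pointwise linear independence of the $\alpha_i$, orientation bookkeeping) are points the paper disposes of just before stating the lemma, so there is no substantive difference.
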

\begin{proof}
Let $p\in M$ be a point and consider the framing $\tau=(\alpha_0,\alpha_1,\alpha_2)_p$. In the three--dimensional case $\SA(\R^3)$ is homotopic to a $2$--sphere. This homotopy can be realized by projection $\pi$ onto the space of cooriented $2$--plane distributions. The degree of the evaluation map is computed via
$$\S^2\stackrel{\varepsilon_{(p,\tau)}}{\longrightarrow}\SA(T_pM)\stackrel{\pi}{\longrightarrow}Dist(\R^3)\cong\S^2$$
$$z\longmapsto e_0(z)\alpha_0(p)+e_1(z)\alpha_1(p)+e_2(z)\alpha_2(p)\longmapsto (e_0(z),e_1(z),e_2(z)).$$
Being the identity, this map has degree $1$.
\end{proof}
% --------------------------------------
\subsection{$3$--Sasakian manifolds}
Let us define a class of contact manifolds with natural linear contact spheres.

\begin{definition}
Let $(M^{4n+3},g)$ be a Riemannian manifold. It is said to be $3$--Sasakian if the holonomy group of the metric cone $(C(M),\bar{g})=(M\times\R^+, r^2g+dr\otimes dr)$ reduces to $Sp(n+1)$.
\end{definition}
\noindent This implies that $(C(M),\bar{g})$ is a hyperk\"ahler manifold $(C(M),\bar{g},I,J,K)$. The hyperk\"ahler structure induces a $2$--sphere of complex structures
$$\S^2(\bar{g})=\{e_0I+e_1J+e_2K: e_0^2+e_1^2+e_2^2=1\}.$$

\noindent Any such complex structure $\J\in\S^2(\bar{g})$ endows $(M\times\R^+,\bar{g})$ with a K\"ahler structure, providing $(M,g)$ with a Sasakian structure. The vertical vector field $\partial_r$ on $M\times\R^+$ is orthogonal to $M\times\{1\}$ and the form $\alpha$ defined by $\alpha_\J(v)=g(v,\J\partial_r)$ is a contact structure. Thus, a $3$--Sasakian structure provides a linear contact sphere $\{\alpha_\J\}_{\J\in\S^2(\bar{g})}$ generated by $\alpha_I,\alpha_J$ and $\alpha_K$.

%\begin{definition}
%Let $M$ be a smooth manifold and consider the embedding $M\stackrel{e}{\longrightarrow}M\times\R$. Let $t$ be the coordinate on the $\R$ factor. The manifold $M$ is hypercontact if there exists a hyperk\"ahler cone metric $g$ on $M\times\R$ such that the compatible complex structures $(I,J,K)$ satisfy:\\
%\begin{itemize}
%\item[1)] For any $\J\in\{I,J,K\}$, the complex tangencies $e^*(e_*TM\cap \J e_*TM)=\ker(\alpha_\J)$ are contact structures, where $\alpha_\J(v)=g(e_*v,\J\partial_t)$.\\

%\item[2)] The three symplectic structures $d(e^t\alpha_\J)$ on $M\times\R$ define the same hyperk\"ahler structure.
%\end{itemize}
%\end{definition}

%\begin{remark}
%In a hypercontact manifold, the associated linear sphere defined by $(\alpha_I,\alpha_J,\alpha_K)$ is a contact sphere.\end{remark}

\begin{theorem}\label{thm:linSas}
Let $M^{4n+3}$ be a $3$--Sasakian manifold. The class of the associated linear contact sphere is an element of infinite order in $\pi_2(\SC(M,\ker(\alpha_I)))$.
\end{theorem}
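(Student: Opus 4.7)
The strategy is to apply the evaluation morphism
$$\varepsilon_{(p,\tau)}:\pi_2(\SC(M,\xi))\longrightarrow\pi_2(\SA(\R^{4n+3}))\cong\Z$$
at some point $p \in M$ with a convenient framing $\tau$, and to exhibit the image of the linear contact sphere as a non--zero integer. Since the target group is torsion--free, this will immediately imply infinite order for the class in question.

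I would fix $p \in M$ and use the framing induced by the quaternionic splitting $T_p C(M) = T_pM \oplus \R\partial_r \cong \HH^{n+1}$ coming from the hyperk\"ahler structure on the cone. Evaluating the linear contact sphere $\{(\xi_\J=\ker\alpha_\J,\J|_{\xi_\J})\}_{\J\in\S^2(\bar{g})}$ at $p$ and applying the homotopy equivalences
$$\SA(\R^{4n+3})\cong SO(4n+3)/U(2n+1)\cong SO(4n+4)/U(2n+2)$$
used in the proof of the preceding lemma, the pair $(\xi_\J,\J|_{\xi_\J})$ is sent to the compatible complex structure on $T_p C(M)$ that extends $\J|_{\xi_\J}$ and pairs the Reeb direction $R_\J(p)$ with $\partial_r$. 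The identity $\J\partial_r=R_\J$ along $M\times\{1\}$, which is the defining feature of the Sasakian structure associated to $\J$, shows that this extension equals $\J$ itself. Hence the image of $\varepsilon_{(p,\tau)}$ is the standard twistor sphere $\{e_0 I+e_1 J+e_2 K\}\subset SO(4n+4)/U(2n+2)$.

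To see this twistor sphere is non--zero in $\pi_2$, I would compute the first Chern class of the pullback to $\S^2$ of the tautological rank--$(2n+2)$ complex vector bundle over $SO(4n+4)/U(2n+2)$. The quaternionic splitting $\HH^{n+1}\cong\HH\otimes_\R\R^{n+1}$ shows the pullback decomposes as a direct sum of $(n+1)$ copies of the analogous bundle for the base case $\HH\cong\R^4$. In that base case the twistor map is a diffeomorphism $\S^2\stackrel{\sim}{\longrightarrow}SO(4)/U(2)$, and the corresponding rank--two bundle is the classical twistor bundle $\O(1)\oplus\O(1)\longrightarrow\CP^1$, whose first Chern class is $2$. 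Therefore the total first Chern class of the pullback is $2(n+1)\neq 0$, and the class of the linear contact sphere has infinite order.

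The main technical point will be the explicit identification of the pullback bundle with a direct sum of twistor bundles on $\CP^1$. Once this holomorphic splitting is isolated, the Chern class computation reduces to the base case $\HH=\R^4$, where it is classical. The remaining ingredients, namely recognising $(\xi_\J,\J|_{\xi_\J})$ as the restriction of the hyperk\"ahler $\J$ via the identity $\alpha_\J(\cdot)=\bar{g}(\cdot,\J\partial_r)$ and converting non--vanishing of $\varepsilon_{(p,\tau)}$ into infinite order in $\pi_2(\SC(M,\xi))$, are essentially formal.
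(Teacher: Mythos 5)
Your proposal is correct and follows the paper's overall strategy: evaluate the linear contact sphere at a point via $\varepsilon_{(p,\tau)}$ with the quaternionic framing, use the Sasakian identity $\alpha_\J(\cdot)=\bar g(\cdot,\J\partial_r)$ (equivalently $\J\partial_r=R_\J$) to identify the image with the twistor sphere $\{e_0I+e_1J+e_2K\}$ inside $SO(4n+4)/U(2n+2)$, and then show that sphere is non--torsion. Where you diverge is in the final computation. The paper identifies $\pi_2(SO(4m)/U(2m))$ with a subgroup of $\pi_1(U(2m))\cong\Z$ via the homotopy fibration $U(2m)\to SO(4m)\to SO(4m)/U(2m)$, writes the sphere explicitly in spherical angles as $\widetilde I=P_{\theta,\phi}^tIP_{\theta,\phi}$, extracts the clutching loop $\cos\theta\cdot\mathrm{id}+\sin\theta I$ in $U(2m)$ at $\phi=\pi$, and reads off the winding number $2m$ from the complex determinant. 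You instead compute $c_1$ of the pullback of the tautological rank--$(2n+2)$ bundle, splitting it via $\HH^{n+1}\cong\HH\otimes_\R\R^{n+1}$ into $n+1$ copies of the classical twistor bundle $\O(1)\oplus\O(1)$ over $\CP^1\cong SO(4)/U(2)$. These compute the same invariant (the determinant identification of $\pi_1(U(2m))$ with $\Z$ \emph{is} $c_1$ of the tautological bundle) and both yield $2(n+1)=2m$, which is a useful consistency check. Your route buys a more conceptual, coordinate--free reduction to the quaternionic line and makes the answer $2(n+1)$ transparent; the paper's buys an explicit representative of the clutching loop with no appeal to characteristic classes. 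The one point you should spell out is the holomorphic/topological splitting of the tautological bundle over the twistor sphere into $\O(1)$'s, i.e.\ that for flat $\HH$ the fibre $(\R^4,\J)$ over $\J\in\S^2$ assembles into $\O(1)\oplus\O(1)$ rather than a trivial bundle; this is classical but it is the entire content of your degree count, so it deserves a proof or a precise reference.
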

\begin{proof}
Let $p\in M$ and note that the $4n$--distribution $\eta=\ker\alpha_I\cap\ker\alpha_J\cap\alpha_K$ is $(I,J,K)$--invariant. Thus, it can be identified with the quaternionic vector space $\H^n$ by fixing a quaternionic framing $v=\{v_1,\ldots,v_n\}$. This induces a real framing $\tau=\{v,Iv,Jv,Kv\}$ for $\eta$, identifying it with $\R^{4n}$ endowed with the standard quaternionic structure.\\

\noindent Consider the Reeb vector fields $R_I,R_J,R_K$ associated to $\alpha_I,\alpha_J,$ and $\alpha_K$. Extend the framing $\tau$ to $\widetilde\tau=\{\tau,R_I,R_J,R_K\}$. Interpret the space $\SA(\R^{4n+3})$ as pairs of $(v,\J)$, where $v\in\S^{4n+2}\subset\R^{4n+3}$ is a unit vector and $\J$ an almost complex structure in its orthogonal space. Define
\begin{equation}\label{eq:h}
h: \SA(\R^{4n+3})\longrightarrow\mathcal{J}(\R^{4n+3}\oplus\R),\quad (v,\J)\longmapsto\{\widetilde{\J}:\langle v\rangle^{\perp}\oplus\langle v\rangle\oplus\langle\partial_t\rangle\longrightarrow \langle v\rangle^{\perp}\oplus\langle v\rangle\oplus\langle\partial_t\rangle\}
\end{equation}
where the almost complex structure is $\widetilde{\J}=\left(\begin{array}{ccc}
\J & 0 & 0\\
0 & 0 & -1\\
0 & 1 & 0
\end{array}\right)$. This induces a morphism of second homotopy groups. Through the above identification the linear contact sphere generated by $(\alpha_I,\alpha_J,\alpha_K)$ evaluates in a sphere $\langle(\xi_I,I),(\xi_J,J),(\xi_K,K)\rangle\in\SA(\R^{4n+3})$. This sphere maps via (\ref{eq:h}) to the sphere of complex structures generated by the triple $(I,J,K)$ in $\mathcal{J}(\R^{4n+4})$.\\

\noindent It is left to prove that the class of that sphere is an infinite order element of $\pi_2(SO(4n+4)/U(2n+2))$. Let us write $m=n+1$ to ease the notation. The homotopy fibration
$$U(2m)\longrightarrow SO(4m)\longrightarrow SO(4m)/U(2m)$$
induces an injection $\pi_2(SO(4m)/U(2m))\longrightarrow\pi_1(U(2m))\cong\Z$.\\

\noindent Let $(\theta,\phi)\in[0,2\pi]\times[0,\pi]$ be spherical angles. Define
$$J_\theta=\cos\theta J+\sin\theta K,\quad \widetilde{I}=\cos\phi I+\sin\phi J_\theta,\quad P_{\theta,\phi}=\cos(\phi/2)I+\sin(\phi/2)J_\theta.$$
The sphere is represented by $\widetilde{I}$, we shall compute its image under the boundary morphism. Note that $P_{\theta,\phi}\in SO(4m)$ and $\widetilde{I}=P_{\theta,\phi}^tIP_{\theta,\phi}$ . Further $P_{\theta,\pi}=J_\theta=(cos\theta\cdot id+\sin\theta I)J$, with $cos\theta\cdot id+\sin\theta I\in U(2m)$ and $J\in SO(4m)$. This decomposition provides a representative in $\pi_2(SO(4m)/U(2m))$. Thus the loop in $\pi_1(U(2m))$ is provided by $cos\theta\cdot id+\sin\theta I$ with $\theta\in[0,2\pi]$. Since the identification $\pi_1(U(2m))\cong\Z$ is given by the complex determinant, the degree of the sphere is $2m$.
\end{proof}
\noindent The argument above applies to a broader class of manifolds:
\begin{definition}
A contact manifold $(M,\xi_0)$ is said to possess an almost--quaternionic sphere if it admits a sphere $\S^2\stackrel{\xi}{\longrightarrow}\SC(M,\xi_0)$ such that:
\begin{itemize}
\item[1)] There exists a family ${\{\J_p\}}_{p\in\S^2}$ compatible with the contact distributions $\xi_p=\xi(p)$,
\item[2)] There exists a point $q\in M$ and a framing $\tau$ for $T_qM$ such that $e_{q,\tau}(\xi(\S^2))$ becomes the linear sphere associated to $\langle(\xi_I,I),(\xi_J,J),(\xi_K,K)\rangle\in\SA(\R^{4n+3})$.
\end{itemize}
\end{definition}
\begin{corollary}
An almost--quaternionic sphere inside a contact manifold $(M,\xi)$ generates a class of infinite order in $\pi_2(\SC(M,\xi))$.
\end{corollary}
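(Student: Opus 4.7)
The plan is to reduce the corollary directly to the computation performed in the proof of Theorem~\ref{thm:linSas}. The definition of an almost--quaternionic sphere is engineered precisely so that the image of its homotopy class in $\pi_2(\SA(\R^{4n+3}))$ agrees with that of the linear sphere produced by a $3$--Sasakian structure; once this identification is in place, the same homotopy--theoretic calculation applies verbatim.

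First I would consider the evaluation homomorphism
$$\varepsilon_{(q,\tau)}=\widetilde{e}_{(q,\tau)}\circ \i_*:\pi_2(\SC(M,\xi))\longrightarrow\pi_2(\SA(\R^{4n+3})),$$
where $(q,\tau)$ is the point and framing furnished by condition~(2) of the definition. Condition~(1), the existence of a global family $\{\J_p\}_{p\in\S^2}$ of almost complex structures compatible with the sphere's distributions, gives a canonical lift of $\xi(\S^2)$ to $\SA\SC(M,\xi)$; since the homotopy fibre of the forgetful map $\SA\SC(M,\xi)\longrightarrow\SC(M,\xi)$ is contractible, any two such compatible families are homotopic through compatible ones, so this lift represents $\i_*[\xi]$. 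Condition~(2) then identifies $\varepsilon_{(q,\tau)}([\xi])$ with the class in $\pi_2(\SA(\R^{4n+3}))$ of the linear sphere $\langle (\xi_I,I),(\xi_J,J),(\xi_K,K)\rangle$.

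Next I would post--compose with the map $h_*$ of~(\ref{eq:h}) and with the boundary injection $\pi_2(SO(4m)/U(2m))\hookrightarrow\pi_1(U(2m))\cong\Z$ coming from the homotopy fibration $U(2m)\longrightarrow SO(4m)\longrightarrow SO(4m)/U(2m)$, with $m=n+1$. The proof of Theorem~\ref{thm:linSas} evaluated exactly this composite on the linear sphere generated by $(I,J,K)$ and obtained the integer $2m=2(n+1)$, witnessed by the explicit loop $\cos\theta\cdot\mathrm{id}+\sin\theta\, I$ in $U(2m)$.

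Since the whole composition is a homomorphism of abelian groups sending $[\xi]$ to the non--zero integer $2(n+1)$, the class $[\xi]$ cannot be torsion and therefore has infinite order in $\pi_2(\SC(M,\xi))$. The only step requiring real attention, which I would flag as the main (mild) obstacle, is the naturality observation in the first paragraph: one has to verify that the compatible family $\{\J_p\}$ supplied by condition~(1) genuinely represents $\i_*[\xi]$, so that condition~(2) translates into an equality of classes in $\pi_2(\SA(\R^{4n+3}))$ and not merely a pointwise coincidence at $q$.
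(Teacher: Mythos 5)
Your proposal is correct and follows exactly the route the paper intends: the corollary is stated without a separate proof precisely because the definition of an almost--quaternionic sphere is designed so that the evaluation map $\varepsilon_{(q,\tau)}$, composed with $h_*$ and the injection $\pi_2(SO(4m)/U(2m))\hookrightarrow\pi_1(U(2m))\cong\Z$, sends its class to the nonzero integer $2(n+1)$ computed in the proof of Theorem~\ref{thm:linSas}. Your additional care about verifying that the family $\{\J_p\}$ from condition~(1) represents $\i_*[\xi]$ is a sensible point the paper leaves implicit, but it does not change the argument.
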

% --------------------------------------
\section{Reeb Flow for Spheres}\label{sec:gi}

\noindent Let us prove Corollary \ref{cor:Gi}. The standard contact sphere will be denoted $(\S^{2n+1},\xi)$. The relevant case is that of the spheres $\S^{2k+1}$ with $k$ odd. Indeed, for the spheres $\S^{2k+1}$ with $k=2n$ the Reeb flow is non--trivial in $\pi_1(SO(4n+2))\hookrightarrow\pi_1(\Diff_0(\S^{4n+1}))$. Thus it cannot be contractible in $\Cont_0(M,\xi)\subset\Diff_0(\S^{4n+1})$. In order to conclude the case $\S^{4n+3}$ we detail the construction in Sections \ref{sec:pre} and \ref{sec:lin}.\\

\noindent Consider the endomorphisms $I,J,K$ of $\R^{4(n+1)}$ obtained by direct sum of the corresponding endomorphisms $i,j,k$ of $\R^4$, satisfying the quaternionic relations
$$i^2=j^2=k^2=ijk=-1.$$
The endomorphisms $I,J,K$ anti--commute and hence any of their linear combinations is a complex structure. Let $e=(e_0,e_1,e_2):\S^2\longrightarrow\R^3$ be the standard embedding of the $2$--sphere in Euclidean $3$--space with azimuthal angle $\theta$ and polar angle $\phi$:
$$e_0=\cos\theta\sin\phi,\quad e_1=\sin\theta\sin\phi,\quad e_2=\cos\phi,\quad (\theta,\phi)\in[0,2\pi]\times[0,\pi].$$

\noindent A complex structure $\J\in End(\R^{4n+4})$ induces the real $(4n+2)$--distribution
$$\xi_\J=T\S^{4n+3}\cap\J T\S^{4n+3}$$ of $\J$--complex tangencies on the sphere $\S^{4n+3}$. There exists a unique, up to scaling, $U(\J,n)$--invariant $1$--form $\alpha_\J$ such that $\ker\alpha_\J=\xi_\J$. It is given by $\alpha(z)=z^t\J dz$. We use the following three $1$--forms
$$\alpha_0=\alpha_I,\quad\alpha_1=\alpha_J,\quad \alpha_2=\alpha_K.$$
Their respective Reeb vector fields $R_0$, $R_1$ and $R_2$ are linearly independent and their flows are given by the family of rotations generated by $I$, $J$ and $K$. Consider the $1$--form $\alpha=e_0\alpha_0+e_1\alpha_1+e_2\alpha_2$. The form $\alpha$ is a contact form on $\S^{4n+3}$ for each value of $e$. Although not used in the rest of the article, it is simple to prove the following
\begin{lemma}
$(\S^2\times\S^{4n+3},\ker\alpha)$ is a contact manifold.
\end{lemma}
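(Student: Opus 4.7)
The plan is to compute $\alpha\wedge(d\alpha)^{2n+2}$ directly and verify that it is nowhere vanishing on the $(4n+5)$--manifold $\S^2\times\S^{4n+3}$. Split $d\alpha=\gamma+\beta$ according to product type, where $\gamma=\sum_i de_i\wedge\alpha_i$ has one $\S^2$--leg and one $\S^{4n+3}$--leg, and $\beta=\sum_i e_i\,d\alpha_i$ is purely vertical. Since each $de_i$ is a $1$--form on the $2$--dimensional base, $\gamma^k=0$ for $k\geq 3$; because $\gamma$ and $\beta$ are both $2$--forms they commute, and in the binomial expansion of $(\gamma+\beta)^{2n+2}$ only the term with exactly two copies of $\gamma$ has the correct horizontal degree to yield a top form on $\S^2\times\S^{4n+3}$. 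Therefore
\[
\alpha\wedge(d\alpha)^{2n+2}=\binom{2n+2}{2}\,\alpha\wedge\gamma^2\wedge\beta^{2n}.
\]
A short computation using $\alpha_i\wedge\alpha_j\wedge\alpha_k=\epsilon_{ijk}\,\alpha_0\wedge\alpha_1\wedge\alpha_2$ then gives
\[
\alpha\wedge\gamma^2=-2\,\omega_{\S^2}\wedge\alpha_0\wedge\alpha_1\wedge\alpha_2,
\]
where $\omega_{\S^2}=e_0\,de_1\wedge de_2+e_1\,de_2\wedge de_0+e_2\,de_0\wedge de_1$ is the standard area form on the base.

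The contact condition therefore reduces to the nonvanishing on $\S^{4n+3}$ of the $(4n+3)$--form $\alpha_0\wedge\alpha_1\wedge\alpha_2\wedge\beta_e^{2n}$ for each fixed $e\in\S^2$, where $\beta_e=\sum_i e_i\,d\alpha_i$. Fix $z\in\S^{4n+3}\subset\H^{n+1}$ and decompose $T_z\S^{4n+3}=\langle R_0,R_1,R_2\rangle\oplus\eta_z$, where $\eta:=\bigcap_i\ker\alpha_i$ is the $4n$--dimensional horizontal distribution of the quaternionic Hopf fibration. The quaternionic relations on $\H^{n+1}$ yield $\alpha_i(R_j)=\delta_{ij}$, so $(R_0,R_1,R_2)$ is the frame dual to $(\alpha_0,\alpha_1,\alpha_2)$; evaluating the wedge on any basis extending $(R_0,R_1,R_2)$ by a frame of $\eta_z$ collapses it to $\beta_e^{2n}|_{\eta_z}$. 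Finally, $(\eta_z,I,J,K)\cong\H^n$ is a hyperk\"ahler vector space, and $\beta_e|_{\eta_z}=e_0\omega_I+e_1\omega_J+e_2\omega_K$ is the K\"ahler form of the unit complex structure $e_0 I+e_1 J+e_2 K$, hence nondegenerate; its $2n$--th power is a volume form on $\eta_z$, completing the argument.

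The main obstacle is the identification of the restriction $\beta_e|_{\eta_z}$ with a K\"ahler form on the horizontal space of the Hopf fibration, where the ambient hyperk\"ahler structure on $\H^{n+1}$ enters in an essential way; merely knowing that each $\alpha_i$ is contact on $\S^{4n+3}$ is not enough. Beyond that point the argument is symbolic wedge--product bookkeeping, made tractable by the $2$--dimensionality of the base and the commutativity of the $2$--forms $\gamma$ and $\beta$.
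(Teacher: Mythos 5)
Your proof is correct, and in fact it supplies an argument where the paper gives none: the authors merely assert that ``it is simple to prove'' this lemma and move on, so there is no proof in the paper to compare against. Your computation is the natural one and all the steps check out: the splitting $d\alpha=\gamma+\beta$ by horizontal degree, the vanishing of all binomial terms except $\binom{2n+2}{2}\alpha\wedge\gamma^2\wedge\beta^{2n}$ (the $\gamma^0$ and $\gamma^1$ terms carry too many legs on the $(4n+3)$--dimensional fibre, and $\gamma^{\geq 3}=0$ for degree reasons on the base), and the identity $\alpha\wedge\gamma^2=-2\,\omega_{\S^2}\wedge\alpha_0\wedge\alpha_1\wedge\alpha_2$. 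The two points that genuinely require care are handled correctly: first, in the Laplace expansion of $\alpha_0\wedge\alpha_1\wedge\alpha_2\wedge\beta_e^{2n}$ over a basis $(R_0,R_1,R_2,v_1,\dots,v_{4n})$ with $v_j\in\eta_z$, every shuffle feeding a vector of $\eta_z$ into the $3$--form kills a column of $\det(\alpha_i(\cdot))$, so only the term $\det(\alpha_i(R_j))\cdot\beta_e^{2n}|_{\eta_z}=\beta_e^{2n}|_{\eta_z}$ survives; second, the identification of $\beta_e|_{\eta_z}$ with $\omega_{J_e}$ for $J_e=e_0I+e_1J+e_2K$, which is a complex structure since $I,J,K$ anticommute and $|e|=1$, so its K\"ahler form is nondegenerate on $\eta_z\cong\H^n$. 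This is consistent with the framework the paper does set up in Section 3 (the duality $\alpha_i(R_j)=\delta_{ij}$ and the quaternionic splitting of $T\S^{4n+3}$ used in the proof of Theorem \ref{thm:linSas}), so your argument fits the paper's conventions and closes the gap cleanly.
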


\noindent Let us compute the loop at infinity for the trivial contact fibration
$$\S^2\times\S^{4n+3}\longrightarrow\S^{2}, (e,p)\longmapsto e.$$
In the spherical coordinates above, we will obtain the loop at infinity corresponding to $\phi=\pi$. The contact connection allows us to lift a vector field $X$ in the base $\S^2$. The lift $\widetilde{X}$ is the unique vector field on $\S^2\times\S^{4n+3}$ conforming the two conditions
$$\alpha(\widetilde{X})=0,\qquad d\alpha(\widetilde{X},V)=0,\quad\mbox{with }V\mbox{ an arbitrary vertical vector field}.$$
Since uniqueness is provided once a solution is found, the following assertion can be readily verified
\begin{lemma}
The lift of the polar vector field $\partial_\phi$ to the contact connection given by $\alpha$ is
$$\widetilde{X}_\phi=\partial_\phi+\frac{1}{2}\left(-\sin\theta R_0+\cos\theta R_1\right).$$
\end{lemma}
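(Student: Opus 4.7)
The plan is to verify the two defining conditions of the horizontal lift directly, relying on two structural observations. First, each $\alpha_i$ and $d\alpha_i$ is pulled back from the fibre $\S^{4n+3}$, so it annihilates any base direction such as $\partial_\phi$; dually, the coefficients $e_i(\theta,\phi)$ are pulled back from the base $\S^{2}$, so $de_i$ kills vertical vectors. Second, using that $R_i$ is generated by the quaternionic structure and the skew-symmetry of $I,J,K$, one checks that the pairing $\alpha_i(R_j)=\delta_{ij}$ holds globally on $\S^{4n+3}$.

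The condition $\alpha(\widetilde{X}_\phi)=0$ is then immediate. With $Y=\tfrac12(-\sin\theta\,R_0+\cos\theta\,R_1)$ and $\widetilde{X}_\phi=\partial_\phi+Y$, one has $\alpha(\partial_\phi)=0$ and $\alpha(R_i)=e_i$, so
\[
\alpha(\widetilde{X}_\phi)=\tfrac{1}{2}\sin\phi\bigl(-\sin\theta\cos\theta+\cos\theta\sin\theta\bigr)=0.
\]

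For the second condition, expand $d\alpha=\sum_i(de_i\wedge\alpha_i+e_i\,d\alpha_i)$ and evaluate on $(\widetilde{X}_\phi,V)$ for an arbitrary vertical $V$. Only two families of terms survive: $de_i(\partial_\phi)\,\alpha_i(V)$, whose sum is $\cos\theta\cos\phi\,\alpha_0(V)+\sin\theta\cos\phi\,\alpha_1(V)-\sin\phi\,\alpha_2(V)$, and $e_i\,d\alpha_i(Y,V)$. For the latter, since $\alpha_i(R_j)=\delta_{ij}$ is constant, Cartan's formula reduces $i_{R_j}d\alpha_i$ to $\SL_{R_j}\alpha_i$. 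Using that the flow of $R_j$ is the one-parameter subgroup of rotations generated by $I,J,K$ respectively, a direct pullback computation produces the cyclic table $\SL_{R_J}\alpha_I=2\alpha_K$, $\SL_{R_I}\alpha_J=-2\alpha_K$ and its cyclic permutations, which essentially encode $[I,J]=2K$. Substituting these Lie derivatives and the trigonometric derivatives $\partial_\phi e_0=\cos\theta\cos\phi$, $\partial_\phi e_1=\sin\theta\cos\phi$, $\partial_\phi e_2=-\sin\phi$, the $\alpha_2(V)$-coefficient collects to $(\cos^2\theta+\sin^2\theta)\sin\phi-\sin\phi=0$, while the $\alpha_0(V)$- and $\alpha_1(V)$-coefficients cancel pairwise between $d\alpha(\partial_\phi,V)$ and the $e_2\,d\alpha_2(Y,V)$ contribution.

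The main technical obstacle is clean sign-tracking across the six Lie derivatives $\SL_{R_i}\alpha_j$ with $i\neq j$, which arise as cyclic shifts of the commutator $[I,J]=2K$. Once these are tabulated, the cancellation in $d\alpha(\widetilde{X}_\phi,V)$ is purely trigonometric, and the claimed formula follows from the uniqueness of the horizontal lift invoked in the statement.
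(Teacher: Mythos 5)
Your verification is correct and is exactly the argument the paper has in mind: the paper offers no computation, merely noting that uniqueness of the horizontal lift reduces the lemma to checking $\alpha(\widetilde{X}_\phi)=0$ and $d\alpha(\widetilde{X}_\phi,V)=0$, which you carry out in full. Your key ingredients --- $\alpha_i(R_j)=\delta_{ij}$, the reduction $i_{R_j}d\alpha_i=\mathcal{L}_{R_j}\alpha_i$ via Cartan's formula, and the commutator table coming from $[I,J]=2K$ --- do produce the stated cancellation, so this is a faithful (and more explicit) rendering of the paper's ``readily verified'' claim.
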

\noindent The Hamiltonian will appear once we pull--back the contact form $\alpha$ with the $\pi$--time flow of the lift $\widetilde{X}_\phi$. Consider the linear endomorphism $F_\theta=\frac{1}{2}\left(-\sin\theta I+\cos\theta J\right)$. The flow associated to $\widetilde{X}_\phi$ induces a diffeomorphism between the central fibre $\{\phi=0\}$ and the fibre at an arbitrary $\phi$. This diffeomorphism can be expressed as
$$\varphi_\phi:\S^{4n+3}\longrightarrow\S^{4n+3},\quad \varphi(p)=e^{F_\theta\phi}p.$$
This is understood as a map in complex space $\C^{2n+2}$ restricted to the sphere. The theory explained in Section \ref{sec:pre}, in particular formula (\ref{eq:radial_triv}), implies that the pull--back will be of the form $\alpha_2+H(p,\phi)d\theta$. A computation yields
\begin{lemma}
$\varphi_\phi^*(\alpha)=\alpha_2+\sin^2(\phi/2)d\theta$
\end{lemma}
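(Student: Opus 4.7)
The map $\varphi_\phi(p) = e^{F_\theta\phi}p$ is quaternionic in nature, and the whole calculation hinges on one algebraic miracle: since $I$ and $J$ anticommute and each squares to $-\Id$, a direct check gives $F_\theta^2 = -\tfrac{1}{4}\Id$. I will use this to write $e^{F_\theta\phi}$ in closed form as
$$U := e^{F_\theta\phi} = \cos(\phi/2)\Id + 2\sin(\phi/2)F_\theta,$$
an orthogonal endomorphism that commutes with $F_\theta$. Its partial derivatives in the base coordinates are then $\partial_\phi U = F_\theta U$ and $\partial_\theta U = 2\sin(\phi/2)F'_\theta$ with $F'_\theta := \partial_\theta F_\theta = \tfrac{1}{2}(-\cos\theta I - \sin\theta J)$. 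The noncommutativity of $F_\theta$ with $F'_\theta$ would make the $\theta$-derivative of the exponential unpleasant without the closed form, and sidestepping this is really the only delicate point of the argument.

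Writing $M := e_0 I + e_1 J + e_2 K$ and $\alpha(q) = \la q, M\,dq\ra$ at $q = Up$, the plan is to expand $\varphi_\phi^*\alpha$ into its purely fiberwise component plus its $d\phi$- and $d\theta$-coefficients, and to verify separately that these equal $\alpha_2$, $0$, and $\sin^2(\phi/2)$ respectively. For the fiberwise component I will reduce to the identity $U^{-1}MU = K$: conjugation by $U$ acts on the real span of $\{I,J,K\}$ as a rotation about $\hat n = -\sin\theta I + \cos\theta J$, and Rodrigues' formula shows this rotation carries $(e_0, e_1, e_2) = (\cos\theta\sin\phi,\,\sin\theta\sin\phi,\,\cos\phi)$ to $(0,0,1)$. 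For the $d\phi$-coefficient $\la Up, MF_\theta Up\ra$ I will expand $MF_\theta$ in the basis $\{\Id, I, J, K\}$ and observe that its $\Id$-component $\tfrac{1}{2}(\sin\theta e_0 - \cos\theta e_1)$ vanishes identically from the form of the $e_i$; then $MF_\theta$ is skew-symmetric and the pairing is zero (as is already predicted by the radial trivialization formula (\ref{eq:radial_triv})).

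The $d\theta$-coefficient equals $2\sin(\phi/2)\la Up, MF'_\theta p\ra$, and the same kind of expansion gives $MF'_\theta = \tfrac{1}{2}\sin\phi\cdot \Id - \cos\phi\cdot F_\theta$. Combined with the elementary evaluations $\la Up, p\ra = \cos(\phi/2)$ and $\la Up, F_\theta p\ra = \tfrac{1}{2}\sin(\phi/2)$ --- the latter via $F_\theta^T F_\theta = \tfrac{1}{4}\Id$ --- and the half-angle identity $\sin\phi\cos(\phi/2) - \cos\phi\sin(\phi/2) = \sin(\phi/2)$, one lands on $\sin^2(\phi/2)$ as desired. The main obstacle, as noted, is keeping the quaternionic bookkeeping honest while $\theta$ varies in the base; the closed form for $U$ reduces the entire statement to a finite amount of algebra in the subalgebra generated by $\Id, I, J, K$, i.e.\ the quaternions themselves.
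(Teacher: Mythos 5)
Your computation is correct and supplies exactly the verification the paper leaves implicit (the paper's proof is literally ``a computation yields''), via the same direct route: trivialize, differentiate, and reduce everything to quaternion algebra. I checked the key identities --- $F_\theta^2=-\tfrac14\Id$, hence $U=\cos(\phi/2)\Id+2\sin(\phi/2)F_\theta$; $U^{-1}MU=K$ for $M=e_0I+e_1J+e_2K$ (which gives the fiberwise part $\alpha_2$); the vanishing of the scalar part of $MF_\theta$ (no $d\phi$-term); and $MF_\theta'=\tfrac12\sin\phi\,\Id-\cos\phi\,F_\theta$ together with $\la Up,p\ra=\cos(\phi/2)$, $\la Up,F_\theta p\ra=\tfrac12\sin(\phi/2)$ --- and they combine as you say to give the $d\theta$-coefficient $2\sin(\phi/2)\cdot\tfrac12\sin(\phi/2)=\sin^2(\phi/2)$.
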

\noindent The loops correspond to the flow of the vector field associated to $G=-\sin^2(\phi/2)$. The loop at infinity has Hamiltonian $G|_{\phi=\pi}\equiv-1$. Thus it is the Reeb flow.\\

\noindent We have geometrically realized the boundary map of the long exact homotopy sequence (\ref{eq:seq}). The non--contractibility of the Reeb flow will follow from an understanding of the contact sphere above and the group $\pi_2(\Diff_0(\S^{4n+3}))$. Regarding the former we have the following

\begin{lemma}\label{lem:acs} Let $S$ be the sphere of complex structures
$$S=\{e_0I+e_1J+e_2K:e\in\S^2\}\subset SO(4n+4)/U(2n+2).$$
\begin{itemize}
\item[1)] $S$ represents a non--trivial element of $\pi_2(SO(4n+4)/U(2n+2))\cong\Z$.
\item[2)] The image of $S$ in $\SC(\S^{4n+3},\xi)$ generates an infinite cyclic subgroup in $\pi_2(\SC(\S^{4n+3},\xi))$.
\end{itemize}
\end{lemma}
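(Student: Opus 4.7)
The plan is to derive both parts from the degree computation already carried out inside the proof of Theorem~\ref{thm:linSas}. Part (1) is a purely linear-algebraic calculation on the homogeneous space $SO(4n+4)/U(2n+2)$; part (2) will follow from (1) via the evaluation morphism $\varepsilon_{(p,\tau)}$, which transports the computation up to $\pi_2(\SC(\S^{4n+3},\xi))$.

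For (1), I would use the long exact sequence of the fibration $U(2m)\to SO(4m)\to SO(4m)/U(2m)$ with $m=n+1$ to obtain the injection $\pi_2(SO(4m)/U(2m))\hookrightarrow\pi_1(U(2m))\cong\Z$. To read off the image of $[S]$ under this injection, parametrize a general point of $S$ by spherical coordinates as $\widetilde{I}=\cos\phi\cdot I+\sin\phi\cdot J_\theta$ with $J_\theta=\cos\theta J+\sin\theta K$, and verify $\widetilde{I}=P_{\theta,\phi}^t I P_{\theta,\phi}$ for $P_{\theta,\phi}=\cos(\phi/2)I+\sin(\phi/2)J_\theta\in SO(4m)$. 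The equatorial loop $\theta\mapsto P_{\theta,\pi}=J_\theta$ then splits as $(\cos\theta\cdot\mathrm{id}+\sin\theta I)\cdot J$ with $\cos\theta\cdot\mathrm{id}+\sin\theta I\in U(2m)$ and $J\in SO(4m)$, producing a representative of the boundary class in $\pi_1(U(2m))$. Its complex determinant is $e^{2mi\theta}$, so under the isomorphism $\pi_1(U(2m))\cong\Z$ the class $[S]$ is sent to $2m=2(n+1)$; in particular it is of infinite order and nontrivial.

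For (2), the standard sphere $(\S^{4n+3},\xi)$ is $3$-Sasakian: it is the unit sphere in $\H^{n+1}$ whose metric cone $\H^{n+1}\setminus\{0\}$ is hyperk\"ahler, and the triple $(\alpha_I,\alpha_J,\alpha_K)$ is precisely the associated $3$-Sasakian triple. Hence the image of $S$ in $\SC(\S^{4n+3},\xi)$ is the linear contact sphere attached to this $3$-Sasakian structure. Composing the evaluation $\varepsilon_{(p,\tau)}:\pi_2(\SC(\S^{4n+3},\xi))\to\pi_2(\SA(\R^{4n+3}))$ with the stabilization map $h$ from \eqref{eq:h} sends this class to $[S]\in\pi_2(SO(4n+4)/U(2n+2))$. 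Since the composition is a group morphism and $[S]$ is of infinite order by part (1), the image of $S$ in $\SC(\S^{4n+3},\xi)$ generates an infinite cyclic subgroup. The only real obstacle is the determinant computation in (1); once the $U(2m)\cdot SO(4m)$ factorization at $\phi=\pi$ has been identified, everything else is formal.
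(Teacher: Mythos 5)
Your proposal is correct and follows essentially the same route as the paper: the paper's proof of Lemma~\ref{lem:acs} simply cites the argument in the proof of Theorem~\ref{thm:linSas}, which is exactly the computation you reproduce (the injection $\pi_2(SO(4m)/U(2m))\hookrightarrow\pi_1(U(2m))$, the factorization $P_{\theta,\pi}=(\cos\theta\cdot\mathrm{id}+\sin\theta I)J$, the determinant giving degree $2m$, and the transport to $\pi_2(\SC(\S^{4n+3},\xi))$ via $\varepsilon_{(p,\tau)}$ and $h$). Your explicit identification of $(\S^{4n+3},\xi)$ as $3$--Sasakian and the determinant value $e^{2mi\theta}$ are just slightly more detailed renderings of steps the paper leaves implicit.
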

\begin{proof}
Both statements follow from the argument provided in the proof of Theorem \ref{thm:linSas}.
\end{proof}

% \noindent The space $\SC(\S^{4n+3})$ can be considered embedded inside $SO(2n)/U(n)$ once an almost--complex structure is fixed. Fix $I\in SO(2n)/U(n)$. Then the sphere $S$ is actually contained in $\SC(\S^{4n+3})$.
% \noindent This follows from the non--triviality of the map $\varepsilon:\pi_2(\SC(\S^{4n+3}))\longrightarrow\Z$ described in the previous section. The computation is non--zero because the Reeb vector field is the normal vector field. It generates a Hopf fibration of positive degree.\\

\noindent Concerning the group $\Diff_0(\S^{4n+3})$, the following lemma will suffice.
\begin{lemma}\label{lem:tor}
$\pi_2(\Diff(\S^{4n+3}))\otimes\Q=0$ for $n\geq 2$.
\end{lemma}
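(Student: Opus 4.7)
The plan is to reduce the lemma to the Farrell--Hsiang rational computation of the homotopy of $\Diff$ of a disk via a standard restriction fibration. First I would fix a smoothly embedded disk $\D^{4n+3}\subset\S^{4n+3}$ and introduce the Serre fibration
$$\Diff(\D^{4n+3},\partial)\longrightarrow\Diff(\S^{4n+3})\longrightarrow\mathrm{Emb}(\D^{4n+3},\S^{4n+3}),$$
whose fibre consists of diffeomorphisms of $\S^{4n+3}$ supported in the complementary disk. The base of embeddings deformation retracts (after choosing an orientation component) onto the oriented frame bundle of $\S^{4n+3}$, which is homotopy equivalent to $SO(4n+4)$. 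Combining this with $\pi_2(SO(4n+4))=0$, the associated long exact sequence yields
$$\pi_3(SO(4n+4))\longrightarrow\pi_2(\Diff(\D^{4n+3},\partial))\longrightarrow\pi_2(\Diff(\S^{4n+3}))\longrightarrow 0,$$
so $\pi_2(\Diff(\S^{4n+3}))\otimes\Q$ is a quotient of $\pi_2(\Diff(\D^{4n+3},\partial))\otimes\Q$ and it is enough to prove the vanishing of the latter.

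For that I would invoke the Farrell--Hsiang theorem, which in the concordance stable range identifies $\pi_i(\Diff(\D^d,\partial))\otimes\Q$ with the rational algebraic $K$--theory of $\Z$ (via Waldhausen's pseudoisotopy equivalence and Borel's computation $K_{4k+1}(\Z)\otimes\Q\cong\Q$, $k\geq 1$). The outcome is that these groups are concentrated in degrees $i\equiv 3\pmod 4$. Since $2\not\equiv 3\pmod 4$, the stable value in degree two is zero. Igusa's concordance stability theorem places one in this stable range as soon as $d\geq 2i+7$, which for $i=2$ reads $d\geq 11$; this is precisely the condition $4n+3\geq 11$, i.e.\ $n\geq 2$.

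The main obstacle is the stability range. Farrell--Hsiang's vanishing is an assertion about a stable invariant, and what ties the hypothesis $n\geq 2$ to the statement is exactly Igusa's bound $d\geq 2i+7$, saturated by $4n+3\geq 11$; below this threshold unstable smooth pseudoisotopy classes could in principle contribute rational elements and the approach would break down. Once stability is in place, the rest is a formal unwinding of the long exact sequence above.
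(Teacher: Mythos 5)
Your reduction to the disk is sound and essentially the same as the paper's: where you use the restriction fibration over $\mathrm{Emb}(\D^{4n+3},\S^{4n+3})\simeq SO(4n+4)$, the paper uses the splitting $\Diff_0(\S^l)\simeq SO(l+1)\times\Diff_0(\D^l,\partial)$; either way $\pi_2(\Diff(\S^{4n+3}))\otimes\Q$ vanishes once $\pi_2(\Diff_0(\D^{4n+3},\partial))\otimes\Q$ does. The appeal to Farrell--Hsiang (rational homotopy of $\Diff(\D^d,\partial)$ concentrated in degrees $\equiv 3\pmod 4$ for $d$ odd, within the concordance stable range) is likewise the paper's formula (\ref{fh}).

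The gap is exactly the point you name as ``the main obstacle'' and then wave through: the stability range at $n=2$. The concordance stable range is $0\leq j<\phi(l)$ with $\phi(l)=\min\{(l-4)/3,(l-7)/2\}$; Igusa's theorem gives $k$-connectivity of the stabilization map when $\dim\geq\max(2k+7,3k+4)$, hence isomorphisms on homotopy only \emph{strictly below} that connectivity. For $j=2$ this forces $l>11$, so $(i,d)=(2,11)$ --- the case $n=2$ --- sits precisely on the boundary. Your inequality $d\geq 2i+7$ with equality allowed is not justified; at the borderline one obtains at best a surjection onto the stable (vanishing) group, which does not show the unstable group is torsion. This is why the paper treats $n=2$ separately: it uses the homotopy fibration $\Diff_0(\D^{12},\partial)\longrightarrow C(\D^{11})\longrightarrow\Diff_0(\D^{11},\partial)$ of pseudo-isotopy spaces, the rational vanishing of $\pi_1(C(\D^{11}))$ and $\pi_2(C(\D^{11}))$ coming from algebraic $K$-theory, and the fact that $\pi_1(\Diff_0(\D^{12},\partial))$ is torsion (which \emph{is} in the stable range, $l=12$ being even), so that the long exact sequence sandwiches $\pi_2(\Diff_0(\D^{11},\partial))$ between two torsion groups. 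You must either justify that $(2,11)$ genuinely lies in the stable range for the precise form of the theorem you invoke, or supply a separate argument of this kind for $n=2$.
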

\begin{proof}
This is a result in algebraic topology. Let $\Diff_0(\D^l,\partial)$ be the group of diffeomorphisms of the $l$--disk restricting to the identity at the boundary. Note the homotopy equivalence
$$\Diff_0(\S^l)\simeq SO(l+1)\times\Diff_0(\D^l,\partial)$$
and that $\pi_2(SO(l+1))=0$ since $SO(l+1)$ is a Lie group. Let $\phi(l)=\min\{(l-4)/3,(l-7)/2\}$. In the stable concordance range $0\leq j<\phi(l)$ we have
\begin{equation}\label{fh}
\pi_j(\Diff_0(\D^l,\partial))\otimes\Q=0,\quad\mbox{if }l\mbox{ even or }4\not|j+1.
\end{equation}
See \cite{We} for details. In particular $\pi_2(\Diff_0(\D^l,\partial))\otimes\Q=0$ for $l>11$. We are thus able to conclude
$$\pi_2(\Diff_0(\S^{4n+3}))\otimes\Q\cong\pi_2(\Diff_0(\D^{4n+3},\partial))\otimes\Q=0,\quad n>2.$$
For the case $n=2$ we provide a more {\it ad hoc} argument. Let $C(\D^{11})$ be the space of pseudo--isotopies for the disk $\D^{11}$. There exists a homotopy fibration
$$\Diff_0(\D^{12},\partial)\longrightarrow C(\D^{11})\longrightarrow\Diff_0(\D^{11},\partial)$$
Algebraic $K$--theory implies $\pi_1C(\D^{11})\otimes\Q=\pi_2C(\D^{11})\otimes\Q=0$. Observe that (\ref{fh}) implies that $\pi_1(\Diff(\D^{12},\partial))$ is a torsion group. The long exact homotopy sequence of the above fibration gives
$$\ldots\longrightarrow\pi_2(C(\D^{11}))\stackrel{\rho_2}{\longrightarrow}\pi_2(\Diff_0(\D^{11},\partial))\stackrel{\partial}{\longrightarrow}\pi_1(\Diff_0(\D^{12},\partial))\stackrel{i_1}{\longrightarrow}\pi_1(C(\D^{11}))\longrightarrow\ldots$$
This implies the short exact sequence of Abelian groups
$$0\longrightarrow A\longrightarrow\pi_2(\Diff_0(\D^{11},\partial))\longrightarrow B\longrightarrow0,$$
where $A=\ker\partial=\im\rho_2$ and $B=\im i_1=\coker\rho_2$. Thus $\pi_2(\Diff_0(\D^{11},\partial))$ is a torsion group.
\end{proof}
\begin{remark}
The Smale conjeture $\Diff_0(\S^3)\simeq SO(4)$ holds for $\S^3$, see \cite{Ha}.
\end{remark}

% \noindent This relates to the previous construction through the almost--complex spheres above:

\noindent In order to conclude Corollary \ref{cor:Gi} for $\S^{4n+3}$ consider the class of the Reeb loop in $\pi_1(\Cont_0(\S^{4n+3},\xi))$. The construction explained above shows that it lies in the image of the boundary morphism
$$\partial_2:\pi_2(\SC(\S^{4n+3},\xi))\longrightarrow\pi_1(\Cont_0(\S^{4n+3},\xi)).$$
If the Reeb class were to be zero the sphere $S$ would lie in the image of $\pi_2(\Diff_0(\S^{4n+3}))$ in (\ref{eq:seq}). Lemma \ref{lem:tor} implies that such a sphere needs to be a torsion class if $n\geq 2$. Lemma \ref{lem:acs} contradicts this statement. Thus proving Corollary \ref{cor:Gi}.\\

\section{Higher homotopy groups} \label{sec:gener}
\noindent The previous arguments can be modified for $n$--dimensional homotopy spheres. This allows us to conclude properties of the higher homotopy type of the contactomorphism group. Consider the evalution map
$$
e_{p,\tau}: \SA(M) \longrightarrow \SA(\R^{2n+1}).
$$
Composition with the homotopy inverse $\i: \SC(M) \to \SA\SC(M)$
defines higher homotopy maps
$$
\pi_k(e_{p,\tau} \circ \i): \pi_{k}(\SC(M)) \longrightarrow \pi_k(\SA(\R^{2n+1})),\quad k \geq 1.
$$
Let us provide a simple application. Define the natural inclusion
$$
i_{\SJ}: \SJ(\R^{2n+2}) \longrightarrow \SC(\S^{2n+1},\xi),\quad\mbox i_{\SJ}(\J)= T\S^{2n+1} \cap \J T\S^{2n+1}.$$
\begin{lemma} \label{lem:inclu}
The map $i_{\SJ}$ is a homotopy inclusion.
\end{lemma}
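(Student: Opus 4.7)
The plan is to construct an explicit homotopy left-inverse to $i_{\SJ}$, which exhibits it as a split monomorphism in the homotopy category and in particular induces injections on every homotopy group. The key observation is that the evaluation composite $e_{p,\tau}\circ\i$ introduced in Section \ref{sec:lin}, when pre-composed with $i_{\SJ}$, becomes (up to homotopy) a homeomorphism between $\SJ(\R^{2n+2})$ and $\SA(\R^{2n+1})$.

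First I would fix the basepoint $p=(0,\dots,0,1)\in\S^{2n+1}\subset\R^{2n+2}$ together with the standard framing $\tau:T_p\S^{2n+1}\stackrel{\sim}{\longrightarrow}\R^{2n+1}$, and form the composite
$$\Phi=e_{p,\tau}\circ\i\circ i_{\SJ}:\SJ(\R^{2n+2})\longrightarrow\SA(\R^{2n+1}).$$
For any $\J\in\SJ(\R^{2n+2})$, orthogonality together with $\J^2=-\mathrm{id}$ forces $\J p\in T_p\S^{2n+1}$ and makes $(\xi_{\J})_p=\langle p,\J p\rangle^{\perp}$ a $\J$-invariant $2n$-plane, so $\J|_{(\xi_{\J})_p}$ is a canonical compatible almost complex structure on it. Any other choice of $\i$ is homotopic to this canonical one (since compatible almost complex structures on a fixed symplectic space form a contractible set), and therefore, read off via $\tau$, $\Phi(\J)=(\J p,\,\J|_{\langle p,\J p\rangle^{\perp}})$ up to homotopy.

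Next I would recognize $\Phi$ as the set-theoretic inverse of the map $h$ defined in (\ref{eq:h}), applied to the splitting $\R^{2n+2}=\R^{2n+1}\oplus\R\partial_t$ with $\partial_t=p$. Given $(v,\J')\in\SA(\R^{2n+1})$, $h(v,\J')$ is the block endomorphism $\widetilde{\J}$ determined by $\widetilde{\J}\partial_t=v$, $\widetilde{\J}v=-\partial_t$, and $\widetilde{\J}|_{v^{\perp}\cap\R^{2n+1}}=\J'$. A direct calculation, using the decomposition $\R^{2n+2}=\langle p\rangle\oplus\langle\J p\rangle\oplus\langle p,\J p\rangle^{\perp}$, shows $h\circ\Phi=\mathrm{id}$ and $\Phi\circ h=\mathrm{id}$ on the nose; hence $\Phi$ is a homeomorphism and in particular a homotopy equivalence.

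It follows that $h\circ e_{p,\tau}\circ\i:\SC(\S^{2n+1},\xi)\longrightarrow\SJ(\R^{2n+2})$ is a homotopy left-inverse to $i_{\SJ}$, which is the content of the lemma. The only non-trivial step is the bookkeeping above: matching the rigid block structure that defines $h$ with the geometric splitting produced by $\J$ and $p$, and using the contractibility of the space of compatible complex structures to replace $\i$ by the canonical restriction without loss.
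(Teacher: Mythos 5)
Your proof is correct and follows essentially the same route as the paper: the paper also forms the composite $c = h\circ(e_{p,\tau}\circ\i)\circ i_{\SJ}$, observes that it is the identity, and concludes that $\pi_k(i_{\SJ})$ is injective for all $k$. You simply carry out explicitly the bookkeeping (the splitting $\R^{2n+2}=\langle p\rangle\oplus\langle \J p\rangle\oplus\langle p,\J p\rangle^{\perp}$ and the contractibility of compatible almost complex structures) that the paper leaves as "the definition of each map implies $c=id$."
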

\begin{proof}
Consider the following chain of maps
$$
c: \SJ(\R^{2n+2}) \stackrel{i_{\SJ}}{\longrightarrow} \SC(\S^{2n+1},\xi) \stackrel{e_{p,\tau} \circ \i}{\longrightarrow} \SA(\R^{2n+1}) \stackrel{h}{\longrightarrow} \SJ(\R^{2n+2}).
$$
The definition of each map implies $c= id$. Therefore, it induces the identity in homotopy:
$$
\pi_k(c)=id: \pi_k(\SJ(\R^{2n+2})) \stackrel{\pi_k(i_{\SJ})}{\longrightarrow} \pi_k(\SC(\S^{2n+1}),\xi) \stackrel{\pi_k(e_{p,\tau}\circ \i)}{\longrightarrow} \pi_k(\SA(\R^{2n+1})) \stackrel{\pi_k(h)}{\longrightarrow} \pi_k(\SJ(\R^{2n+2})).
$$
Thus the map $i_{\SJ}$ induces an injection $\pi_k(i_{\SJ})$, $\forall k\geq0$.
\end{proof}
\noindent This lemma can be combined with results on the homotopy type of the group $\Diff(\S^{2n+1})$. We can then conclude the existence of infinite order elements in certain homotopy groups of $\Cont(\S^{2n+1},\xi)$. Among many others, a simple instance is the following
\begin{lemma}
The group $\pi_5(\Cont(\S^{2n-1},\xi))$ has an element of infinite order, for $n \geq 12$.
\end{lemma}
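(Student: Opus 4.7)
The plan is to apply the long exact sequence (\ref{eq:seq}) to $M=\S^{2n-1}$ at the relevant level,
$$\pi_6(\Diff_0(\S^{2n-1}))\longrightarrow\pi_6(\SC(\S^{2n-1},\xi))\stackrel{\partial_6}{\longrightarrow}\pi_5(\Cont_0(\S^{2n-1},\xi))\longrightarrow\pi_5(\Diff_0(\S^{2n-1})),$$
and to produce an infinite order class in $\pi_6(\SC(\S^{2n-1},\xi))$ whose image under $\partial_6$ survives rationally. By exactness, this happens as soon as $\pi_6(\Diff_0(\S^{2n-1}))\otimes\Q=0$ and the class in $\pi_6(\SC(\S^{2n-1},\xi))$ itself is of infinite order.

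For the first ingredient, Lemma \ref{lem:inclu} provides a homotopy injection $i_{\SJ}:\SJ(\R^{2n})\hookrightarrow\SC(\S^{2n-1},\xi)$, so it suffices to detect infinite order in $\pi_6(\SJ(\R^{2n}))\cong\pi_6(SO(2n)/U(n))$. The long exact sequence of the fibration $U(n)\to SO(2n)\to SO(2n)/U(n)$ combined with Bott periodicity yields $\pi_6(SO(2n)/U(n))\cong\pi_5(U(n))\cong\Z$ in the stable range, which is comfortably satisfied for $n\geq 12$.

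For the second ingredient, I would mirror the argument used in Lemma \ref{lem:tor}. Using the splitting $\Diff_0(\S^l)\simeq SO(l+1)\times\Diff_0(\D^l,\bd)$ with $l=2n-1$, the factor $\pi_6(SO(2n))$ vanishes by Bott periodicity in the stable range. The remaining factor $\pi_6(\Diff_0(\D^{2n-1},\bd))\otimes\Q$ is controlled by formula (\ref{fh}): the numerical condition $4\nmid j+1=7$ is immediate, and the stable concordance bound $j=6<\phi(2n-1)=\min((2n-5)/3,(2n-8)/2)$ is exactly what the hypothesis $n\geq 12$ is designed to deliver. The main obstacle is this numerical bookkeeping, since $n=12$ is precisely the first value for which $j=6$ fits inside the Farrell--Hsiang range. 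Rationalizing the exact sequence above then produces the desired infinite order element in $\pi_5(\Cont(\S^{2n-1},\xi))$.
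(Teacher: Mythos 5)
Your proof is correct and follows essentially the same route as the paper: reduce via the connecting map $\partial_6$ to the two facts that $\pi_6(\SC(\S^{2n-1},\xi))$ contains an infinite order class (via Lemma \ref{lem:inclu} and $\pi_6(SO(2n)/U(n))\cong\Z$) and that $\pi_6(\Diff(\S^{2n-1}))\otimes\Q=0$ for $n\geq 12$ by the Weiss--Williams results. You merely spell out the numerical bookkeeping for $\phi(2n-1)$ and the Bott periodicity computation that the paper leaves implicit.
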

\begin{proof}
Using the connecting map $\partial_6$, as described in equation (\ref{eq:lifting}), the statement is reduced to the following two assertions:
\begin{itemize}
\item[-] $\pi_6(\SJ(\R^{2n}))= \pi_6(SO(2n)/U(n))=\Z$ and therefore, by Lemma \ref{lem:inclu}, $\rk(\pi_6(\SC(\S^{2n-1},\xi)))\geq 1$.
\item[-] $\pi_6(\Diff(\S^{2n-1})) \otimes \Q=0$, for $n\geq 12$. This is again a consequence of the results in \cite{We}.
\end{itemize}
\end{proof}
\noindent Bott Periodicity Theorem allows us to apply the same argument to infinitely many other homotopy groups of $\Cont(\S^{2n-1},\xi)$. These techniques can be adapted for general contact manifolds as long as there is a partial understanding of the homotopy type of their group of diffeomorphisms.
%------------------
%\section{Hola}\label{sec:f}
% \begin{remark}
% \noindent The same argument also proves that the Reeb flow generates a non--trivial element in $\pi_1(Cont(L(p,q),\xi_0))$. The non--triviality is also proven in \cite{Mi}.
% \end{remark}


\begin{thebibliography}{xxxx}
\bibitem[BG]{BG} C.P. Boyer, K. Galicki, $3$--Sasakian manifolds, Surveys in Differential Geometry: Essays on Einstein Manifolds, Surv. Differ. Geom. VI. Boston Int. Press (1999), 123--184.
\bibitem[Bo]{Bo} F. Bourgeois, Contact homology and homotopy groups of the space of contact structures, Math. Res. Lett. 13 (2006), 71--85.
\bibitem[El]{El} Y. Eliashberg, Contact $3$--manifolds twenty years since J. Martinet's work, Ann. Inst. Fourier 46 (1992), 165--192.
%\bibitem[G1]{G1} H. Geiges, C.B. Thomas, Hypercontact manifolds, J. London Math. Soc. 2 (2) 51 (1995), 342--352.
\bibitem[Ge]{Ge} H. Geiges, J. Gonzalo, On the topology of the space of contact structures on torus bundles, Bull. London Math. Soc. 36 (2004), 640--646.
\bibitem[Gi]{Gi} E. Giroux, Sur la g\'eom\'etrie et la dynamique des transformations de contact, S\'eminaire Bourbaki (2009), n.1004.
\bibitem[Ha]{Ha} A. E. Hatcher, A proof of the Smale conjecture, Ann. of Math. 117 (1983), 553--607.
%\bibitem[Ho]{Ho} S. Hong, D. McCullough, J.H. Rubinstein, The Smale conjecture for lens spaces, arXiv:0411016v1.
%\bibitem[Mi]{Mi} I. Milin, Orderability of contactomorphism groups of lens spaces, PhD Thesis, Stanford University (2008).
\bibitem[Pr]{Pr} F. Presas, A class of non--fillable contact structures, Geometry$\&$Topology 11 (2007), 2203--2225.
\bibitem[We]{We} M. Weiss, B. Williams, Automorphisms of manifolds, Surveys on Surgery Theory. Vol. II Papers Dedicated to C.T.C Wall, Annals of Mathematics Studies 149 (2001).
\end{thebibliography}
\end{document}